\begin{document}

\title[Borel degenerations of ACM curves]
{Borel Degenerations of Arithmetically Cohen-Macaulay curves in
$\mathbb{P}^3$}

\author{Gunnar Fl{\o}ystad}
\address{Matematisk Institutt\\
         Johs. Brunsgt. 12\\
         5008 Bergen}
\email{gunnar@mi.uib.no}

\author{Margherita Roggero}
\address{Dipartimento di Matematica dell'Universit\'a di Torino\\
         Via Carlo Alberto 10\\
         10123 Torino, Italy}
\email{margherita.roggero@unito.it}

\keywords{Borel ideal, Hilbert scheme, arithemtically Cohen-Macaulay, 
codimension two}
\subjclass[2010]{14C05, 14Q05, 13P10}
\date{\today}

\begin{abstract}
We investigate Borel ideals on the Hilbert scheme components 
of arithmetically Cohen-Macaulay (ACM) codimension two schemes in 
$\mathbb{P}^n$. 
We give a basic necessary criterion for a Borel ideal to be on
such a component. Then considering ACM curves in $\mathbb{P}^3$ on a quadric
we compute in several examples all the Borel ideals on their Hilbert
scheme component. Based on this we conjecture which Borel ideals are
on such a component, and for a range of Borel ideals we prove that
they are on the component.
\end{abstract}
\maketitle

\theoremstyle{plain}
\newtheorem{theorem}{Theorem}[section]
\newtheorem{corollary}[theorem]{Corollary}
\newtheorem*{main}{Main Theorem}
\newtheorem{lemma}[theorem]{Lemma}
\newtheorem{satlemma}[theorem]{Saturation Lemma}
\newtheorem{baslemma}[theorem]{Basic Limits Lemma}
\newtheorem{proposition}[theorem]{Proposition}
\newtheorem{conjecture}[theorem]{Conjecture}
\newtheorem*{theoremA}{Theorem}
\newtheorem*{theoremB}{Theorem}

\theoremstyle{definition}
\newtheorem{definition}[theorem]{Definition}
\newtheorem{question}[theorem]{Question}
\newtheorem*{claim}{Claim}

\theoremstyle{remark}
\newtheorem{notation}[theorem]{Notation}
\newtheorem{remark}[theorem]{Remark}
\newtheorem{example}[theorem]{Example}

\labelformat{equation}{(#1)}
%Mine egne.
\def\mathbi#1{\textbf{\em #1}}
\newcommand{\hilbp}{{\mathcal{H}\textnormal{ilb}_{\mathbi{p}(z)}^n}}
\newcommand{\hilb}{{\mathcal{H}\textnormal{ilb}}}
\newcommand{\BSt}{\mathcal{M}f}
\newcommand{\cN}{\mathcal{N}}
\newcommand{\Gr}{Gr\"obner}
\newcommand{\Pl}{Pl\"ucker}
\newcommand{\St}{\mathcal{S}t}
\def\gm{{\mathfrak m}}

\newcommand{\psp}[1]{{{\bf P}^{#1}}}
\newcommand{\psr}[1]{{\bf P}(#1)}
\newcommand{\op}{{\mathcal O}}
\newcommand{\opw}{\op_{\psr{W}}}
\newcommand{\go}{\op}

%Initial ideals
\newcommand{\ini}{\text{in}}
\newcommand{\gin}[1]{\text{gin}(#1)}
\newcommand{\kr}{{k}}
\newcommand{\kk}{{k}}
\newcommand{\pd}{\partial}
\newcommand{\vardel}{\partial}
\renewcommand{\tt}{{\bf t}}

%Kategorier

\newcommand{\coh}{{{\text{{\rm coh}}}}}

%Modulkategorier

\newcommand{\modv}[1]{{#1}\text{-{mod}}}
\newcommand{\modstab}[1]{{#1}-\underline{\text{mod}}}

\newcommand{\sut}{{}^{\tau}}
\newcommand{\sumit}{{}^{-\tau}}
\newcommand{\til}{\thicksim}

\newcommand{\totp}{\text{Tot}^{\prod}}
\newcommand{\dsum}{\bigoplus}
\newcommand{\dprod}{\prod}
\newcommand{\lsum}{\oplus}
\newcommand{\lprod}{\Pi}

% Algebraer
\newcommand{\La}{{\Lambda}}
\newcommand{\lam}{{\lambda}}
\newcommand{\GL}{{GL}}

\newcommand{\sirstj}{\circledast}

% Knipper
\newcommand{\she}{\EuScript{S}\text{h}}
\newcommand{\cm}{\EuScript{CM}}
\newcommand{\cmd}{\EuScript{CM}^\dagger}
\newcommand{\cmri}{\EuScript{CM}^\circ}
\newcommand{\cler}{\EuScript{CL}}
\newcommand{\clerd}{\EuScript{CL}^\dagger}
\newcommand{\clerri}{\EuScript{CL}^\circ}
\newcommand{\gor}{\EuScript{G}}
\newcommand{\gF}{\mathcal{F}}
\newcommand{\gG}{\mathcal{G}}
\newcommand{\gM}{\mathcal{M}}
\newcommand{\gE}{\mathcal{E}}
\newcommand{\gD}{\mathcal{D}}
\newcommand{\gI}{\mathcal{I}}
\newcommand{\gP}{\mathcal{P}}
\newcommand{\gK}{\mathcal{K}}
\newcommand{\gL}{\mathcal{L}}
\newcommand{\gS}{\mathcal{S}}
\newcommand{\gC}{\mathcal{C}}
\newcommand{\gO}{\mathcal{O}}
\newcommand{\gJ}{\mathcal{J}}
\newcommand{\gU}{\mathcal{U}}
\newcommand{\mm}{\mathfrak{m}}

\newcommand{\dlim} {\varinjlim}
\newcommand{\ilim} {\varprojlim}

%Kategorier
\newcommand{\CM}{\text{CM}}
\newcommand{\Mon}{\text{Mon}}

%Kategorieer av komplekser

\newcommand{\Kom}{\text{Kom}}

% Begreper homologisk alebra

\newcommand{\EH}{{\mathbf H}}
\newcommand{\res}{\text{res}}
\newcommand{\Hom}{\text{Hom}}
\newcommand{\inhom}{{\underline{\text{Hom}}}}
\newcommand{\Ext}{\text{Ext}}
\newcommand{\Tor}{\text{Tor}}
\newcommand{\ghom}{\mathcal{H}om}
\newcommand{\gext}{\mathcal{E}xt}
\newcommand{\id}{\text{{id}}}
\newcommand{\im}{\text{im}\,}
\newcommand{\codim} {\text{codim}\,}
\newcommand{\resol}{\text{resol}\,}
\newcommand{\rank}{\text{rank}\,}
\newcommand{\lpd}{\text{lpd}\,}
\newcommand{\coker}{\text{coker}\,}
\newcommand{\supp}{\text{supp}\,}
\newcommand{\Ad}{A_\cdot}
\newcommand{\Bd}{B_\cdot}
\newcommand{\Fd}{F_\cdot}
\newcommand{\Gd}{G_\cdot}

%Avbildninger og andre symbolforkortelser

\newcommand{\sus}{\subseteq}
\newcommand{\sups}{\supseteq}
\newcommand{\pil}{\rightarrow}
\newcommand{\vpil}{\leftarrow}
\newcommand{\rpil}{\leftarrow}
\newcommand{\lpil}{\longrightarrow}
\newcommand{\inpil}{\hookrightarrow}
\newcommand{\pils}{\twoheadrightarrow}
\newcommand{\projpil}{\dashrightarrow}
\newcommand{\dotpil}{\dashrightarrow}
\newcommand{\adj}[2]{\overset{#1}{\underset{#2}{\rightleftarrows}}}
\newcommand{\mto}[1]{\stackrel{#1}\longrightarrow}
\newcommand{\vmto}[1]{\overset{\tiny{#1}}{\longleftarrow}}
\newcommand{\mtoelm}[1]{\stackrel{#1}\mapsto}

\newcommand{\eqv}{\Leftrightarrow}
\newcommand{\impl}{\Rightarrow}

\newcommand{\iso}{\cong}
\newcommand{\te}{\otimes}
\newcommand{\into}[1]{\hookrightarrow{#1}}
\newcommand{\ekv}{\Leftrightarrow}
\newcommand{\equi}{\simeq}
\newcommand{\isopil}{\overset{\cong}{\lpil}}
\newcommand{\equipil}{\overset{\equi}{\lpil}}
\newcommand{\ispil}{\isopil}
\newcommand{\vvi}{\langle}
\newcommand{\hvi}{\rangle}
\newcommand{\susneq}{\subsetneq}
\newcommand{\sgn}{\text{sign}}
\newcommand{\prikk}{\bullet}

%Notasjonsforkortelser

\newcommand{\xd}{\check{x}}
\newcommand{\ortog}{\bot}
\newcommand{\tL}{\tilde{L}}
\newcommand{\tM}{\tilde{M}}
\newcommand{\tH}{\tilde{H}}
\newcommand{\tvH}{\widetilde{H}}
\newcommand{\tvh}{\widetilde{h}}
\newcommand{\tV}{\tilde{V}}
\newcommand{\tS}{\tilde{S}}
\newcommand{\tT}{\tilde{T}}
\newcommand{\tR}{\tilde{R}}
\newcommand{\tf}{\tilde{f}}
\newcommand{\ts}{\tilde{s}}
\newcommand{\tp}{\tilde{p}}
\newcommand{\tr}{\tilde{r}}
\newcommand{\tfst}{\tilde{f}_*}
\newcommand{\empt}{\emptyset}
\newcommand{\bfa}{{\mathbf a}}
\newcommand{\bfb}{{\mathbf b}}
\newcommand{\bfd}{{\mathbf d}}
\newcommand{\bfe}{{\mathbf e}}
\newcommand{\bfp}{{\mathbf p}}
\newcommand{\bfc}{{\mathbf c}}
\newcommand{\bfl}{{\mathbf t}}
\newcommand{\bfo}{{\mathbf 0}}
\newcommand{\bfx}{{\mathbf x}}
\newcommand{\la}{\lambda}
\newcommand{\bfen}{{\mathbf 1}}
\newcommand{\ep}{\epsilon}
\newcommand{\en}{r}
\newcommand{\tu}{s}
\newcommand{\carc}{\mbox{char.}}
\newcommand{\D}{\Delta}
\newcommand{\Dm}{\D^{-}}

\newcommand{\Spec}{Spec\,}
\newcommand{\ome}{\omega_E}
\newcommand{\tI}{\tilde{I}}

\newcommand{\bevis}{{\bf Proof. }}
\newcommand{\demofin}{\qed \vskip 3.5mm}
\newcommand{\nyp}[1]{\noindent {\bf (#1)}}
\newcommand{\demo}{{\it Proof. }}
\newcommand{\demodone}{\demofin}
\newcommand{\parg}{{\vskip 2mm \addtocounter{theorem}{1}  
                   \noindent {\bf \thetheorem .} \hskip 1.5mm }}

\newcommand{\red}{{\text{red}}}
\newcommand{\lcm}{{\text{lcm}}}
\newcommand{\Af}{{\mathbb A}^1}

\newcommand{\ucong}{\triangleq}

% Simplisielle komplekser

\newcommand{\dl}{\Delta}
\newcommand{\cdel}{{C\Delta}}
\newcommand{\cdelp}{{C\Delta^{\prime}}}
\newcommand{\dlst}{\Delta^*}
\newcommand{\Sdl}{{\mathcal S}_{\dl}}
\newcommand{\lk}{\text{lk}}
\newcommand{\lkd}{\lk_\Delta}
\newcommand{\lkp}[2]{\lk_{#1} {#2}}
\newcommand{\del}{\Delta}
\newcommand{\delr}{\Delta_{-R}}
\newcommand{\dd}{{\dim \del}}

%Monomialidealer

\newcommand{\Bo}{J}

\renewcommand{\aa}{{\bf a}}
\newcommand{\bb}{{\bf b}}
\newcommand{\cc}{{\bf c}}
\newcommand{\xx}{{\bf x}}
\newcommand{\yy}{{\bf y}}
\newcommand{\zz}{{\bf z}}
\newcommand{\mv}{{\xx^{\aa_v}}}
\newcommand{\mF}{{\xx^{\aa_F}}}

\newcommand{\pnm}{{\bf P}^{n-1}}
\newcommand{\opnm}{{\go_{\pnm}}}
\newcommand{\ompnm}{\omega_{\pnm}}

\newcommand{\pn}{{\mathbb P}^n}
\newcommand{\ptre}{{\mathbb P}^3}
\newcommand{\hele}{{\mathbb Z}}
\newcommand{\nat}{{\mathbb N}}
\newcommand{\rasj}{{\mathbb Q}}

\newcommand{\dt}{{\displaystyle \cdot}}
\newcommand{\st}{\hskip 0.5mm {}^{\rule{0.4pt}{1.5mm}}}              
\newcommand{\disk}{\scriptscriptstyle{\bullet}}
\newcommand{\sat}{{sat}}

\newcommand{\cF}{F_\dt}
\newcommand{\pol}{f}

\newcommand{\disc}{\circle*{5}}

\def\CC{{\mathbb C}}
\def\GG{{\mathbb G}}
\def\ZZ{{\mathbb Z}}
\def\NN{{\mathbb N}}
\def\RR{{\mathbb R}}
\def\OO{{\mathbb O}}
\def\QQ{{\mathbb Q}}
\def\VV{{\mathbb V}}
\def\PP{{\mathbb P}}
\def\EE{{\mathbb E}}
\def\FF{{\mathbb F}}
\def\AA{{\mathbb A}}

%Margherita
\def\bcr{\color{blue} }
\def\ecr{\color{black} }
\def\bcR{\color{red} }
\def\bcV{\color{green} }

\newcommand{\Mf}{\mathcal{M}\!f}
\newcommand{\Ht}{\textnormal{Ht}}
\newcommand{\Supp}{\textnormal{Supp}}
\newcommand{\RevLex}{\mathtt{RevLex}}

%----------------

\newcounter{cont}\newenvironment{lista}        
    {\begin{list}  {$\bullet$} {\setlength{\leftmargin}{0pt}    \setlength{\rightmargin}{0pt}     
    \setlength{\itemindent}{0pt}                \setlength{\labelwidth}{20pt}                \usecounter{cont}     
    }}              {\end{list}}\newcounter{contbis}\newenvironment{listabis}            {\begin{list}           
    {\textbf{\roman{contbis})}}               { \setlength{\leftmargin}{40pt}                \setlength{\rightmargin}{0pt}     
    \setlength{\itemindent}{0pt}                \setlength{\labelwidth}{20pt}                \usecounter{contbis}               }}      
    {\end{list}}
% -------------

\section*{Introduction}

The ideal of any subscheme in a projective space $\pn$ may be 
degenerated through coordinate changes, to a Borel fixed monomial ideal
(henceforth called a Borel ideal). So any component of the Hilbert scheme
of subschemes of $\pn$ contains a Borel ideal.

 Borel ideals in characteristic zero have nice combinatorial descriptions.
Borel ideals are also the most degenerate of all ideals in the
sense that if we degenerate a Borel ideal $J$ to another monomial ideal
$J^\prime$ through coordinate changes, then $J^\prime$ is simply obtained
from $J$ through a permuation of the variables. Put in another way,
the $GL(n+1)$-orbit on the Hilbert scheme of a Borel ideal, is closed.
This raises the problem of investigating and finding these {the} most
degenerate ideals on a component. For instance A.Reeves, \cite{RA},
asks if the set of Borel ideals on a component characterizes the component.

The interest in the geography of Borel ideals on the Hilbert scheme
may be said to date back to Hartshorne's proof of the connectedness of
the Hilbert
scheme. Proceeding through a succession of distractions and degenerations
one may proceed from any Borel ideal to the lex segment Borel ideal.
Surprisingly it was shown that this ideal is a smooth point on the
Hilbert scheme, \cite{RS}, thus identifying a distinguished component, 
the lex segment component of the Hilbert scheme. P.Lella in \cite{Le}
shows how Borel ideals may be connected by irreducible rational 
curves on the Hilbert scheme and so provides insight into the network
of Borel ideals on the Hilbert scheme, and in particular provides
a new proof of its connectedness.

\medskip
In this paper we consider the Hilbert scheme components of arithmetically
Cohen-Macaulay (ACM) subschemes of codimension two in $\pn$. These are 
characterized by their homogeneous ideal in  $S:=\kk[x_0, \ldots, x_n]$\ 
having the shortest possible resolution by free $S$-modules, of length two.
The Hilbert scheme components of ACM codimension two subschemes are well
classified. In particular there is a one-to-one correspondence between
such components and ACM Borel fixed ideals of codimension two for the
ordering $x_0 > x_{1} > \cdots$. They have the following form
\[ J(a;\bfb) =(x_0^a, x_0^{a-1}x_{1}^{b_1}, x_0^{a-2}x_{1}^{b_2}, 
 \ldots, x_{1}^{b_a}) \]
where $0 = b_0 < b_1 < \cdots < b_a$. 
 
We find two basic necessary conditions for a Borel ideal to be on the
component of  the above ideal. The first condition is:

\medskip
\noindent {\bf Theorem \ref{NecTheMain}.}
{\it Let  $J$ be a Borel ideal on the Hilbert scheme component of $J(a;\bfb)$.
Let $d_s = \sum_{i = 1}^s b_i$. Then $x_0^{a-s}x_{1}^{d_s}$ is in $J$ for
each $s = 0, \ldots, a$. }
\medskip

The second condition is standard and 
follows by the semi-continuity of the cohomology
of coherent sheaves. 

\medskip
\noindent {\bf Condition 2.} If $J$ is a saturated Borel ideal on there is component
of $J(a;\bfb)$, then there is the inequality of  Hilbert functions
 $h_{S/J(d)} \leq h_{S/J(a;\bfb)}(d)$) for all $d$ .

\medskip
We then proceed to investigate closer what are the Borel ideals on
specific components. This is a hard task and to obtain reasonably 
comprehensive results we restrict ourselves to the case of ACM
curves on quadrics in $\ptre$. The components of such curves
correspond to Borel ideals
\[ J(l,m) = (x^2, xy^l, y^{l+m}) \]
where $l,m \geq 1$ {and $S=k[x,y,z,w]$}. 
In several example cases, for the following values of $(l,m)$:
\[ (1,3),\, (2,2),\, (3,1), \,(3,3), \]
we find by computation all Borel ideals on this component.
For instance when $(l,m) = (3,3)$ there are $989$ Borel ideals with the 
same Hilbert polynomial as $J(3,3)$, but only $45$ of these are on the 
component of $J(3,3)$. 

 In all the computed cases the only obstructions we have found for a
Borel ideal to be on the component of $J(l,m)$ are given by 
Theorem \ref{NecTheMain} and Condition 2. We therefore make
the following.

\medskip
\noindent {\bf {Conjecture \ref{ExConjObs}.}}
{\it 
If a saturated Borel ideal has the same Hilbert polynomial
as $J(l,m)$, then it is on the component of $J(l,m)$ 
if and only if it fulfills
the criteria of Theorem \ref{NecTheMain} and Condition 2.
}

\medskip
We then exhibit many classes of Borel ideals that are on the component
of $J(l,m)$. To do this we consider explicit families of curves defined
by the $2 \times 2$-minors of the matrix
\[ \left[  \begin{matrix} x & y^l & -F \\
                          0 & x   & y^m
           \end{matrix}  \right ].
\]

By specializing $F$ in various ways we get various Borel ideals
as specializations. In Sections 3, 4, and 5, we investigate
in particular 
the cases when $l = 1, 2$, and $3$ and show many classes of Borel ideals
to be on the component of $J(l,m)$.
In the last Section 6 we give a class of Borel
ideals for general $l$ which is on the component.

\medskip
The organization of the paper is as follows. In Section 1 we recall basic
facts about Hilbert scheme components containing ACM codimension two 
subschemes of $\pn$ and prove the basic condition Theorem \ref{NecTheMain}
on Borel ideals on such a component.
In Section 2 we study example cases and compute all the Borel ideals
on the component of $J(l,m)$ in the range of $(l,m)$ stated above.
We also conjecture what are the Borel ideals on the component of $J(l,m)$.
In Section 3 we state our main theorems of sufficient conditions for 
a Borel ideal to be on the component of $J(l,m)$. In Section 4
we give the families of ideals that we degenerate, and in Section 5 we prove
the results. In Section 6 we exhibit a general class of Borel ideals
on the component of $J(l,m)$.

\section{A necessary condition on Borel degenerations}

In this section we consider components of the Hilbert scheme
whose general point corresponds to arithmetically Cohen-Macaulay (ACM)
schemes of codimension two. These components are well classified. We
give a necessary condition for a Borel ideal to be on such a component.

\subsection{ACM codimension two components of the Hilbert scheme} 
A subscheme $X \sus \pn_k$ where $\kk$ is a field, 
is {\it arithmetically Cohen-Macaulay (ACM)}
if its saturated homogeneous ideal $I = I_X$ in the polynomial ring
$S = \kk[x_0, \ldots, x_n]$ has a minimal free resolution of 
length two
\begin{equation*} \label{NecLigRes} 
I \vpil G_0 \vmto{\phi} G_1
\end{equation*}
where $G_0$ and $G_1$ are graded free $S$-modules.

Let $H$ be the Hilbert scheme corresponding to the Hilbert polynomial
of the quotient ring of $I$.
There is a universal family of schemes
\[ \begin{matrix} Z & \sus & H \times \pn \\
                    &      & \downarrow  \\
                    &      & H
   \end{matrix} \]
flat over $H$, and let $\gI_Z$ be its ideal sheaf in $\gO_{H \times \pn}$.
If we have a morphism from an affine scheme $\Spec B  \pil H$ 
we may pull back $\gI_Z$ and get an ideal sheaf $\gI_B$ in $\pn_B$.

Denote by $I_B$
the graded ideal in $B[x_0, \ldots, x_n]$ of global sections
$\oplus_{d \in \hele} \Gamma(\pn_B, \gI_B(d))$. Note that since
$\Spec B$ is affine, the sheafification of $(I_B)_d$ over $\Spec B$
is the pushdown $p_*\gI_B(d)$ by the natural map $p: \pn_B \pil \Spec B$.
The graded ideal $I_B$ will in general not be a flat family of ideals over
$\Spec B$. We shall however see that in our situation, there is an open
subset $\Spec B$ of the Hilbert scheme $H$ such that $I_{B}$ becomes a 
flat family of 
ACM codimension two ideals with the same resolution as $I$. 

\bigskip

\begin{proposition} \label{NecProJB}
Let $n \geq 3$ and 
\[ I \vpil G_0 \vpil G_1 \]
be the minimal free resolution of an ACM ideal $I$ of 
codimension two, 
corresponding to a point {$\mathfrak i$} on the Hilbert scheme $H$. 
Then there is an open affine subset $U = \Spec B \sus H$
containing {$\mathfrak i$} such that the ideal of graded global sections $I_B$ defined
above is flat over $B$ and has a resolution
\[ I_B \vpil G_0 \te_{\kk} B \vmto{\phi_B} G_1 \te_{\kk} B \]
whose fibre at the point {$\mathfrak i$} 
is the resolution of $I$.
\end{proposition}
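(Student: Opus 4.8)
The plan is to construct the open set $U$ by combining two standard loci on the Hilbert scheme: the locus where the fibre remains ACM of codimension two with the "same" resolution shape, and the locus where the graded pieces of the saturated ideal have locally constant rank (so that $I_B$ is flat and commutes with base change). Concretely, first I would fix the numerical data: let $\mathbf{d}$ be the Hilbert function of $S/I$, and recall that the resolution $0 \to G_1 \xrightarrow{\phi} G_0 \to I \to 0$ has $G_0 = \oplus S(-a_j)$ and $G_1 = \oplus S(-c_k)$ with prescribed twists determined by the Betti numbers of $I$. I would then work over the affine chart $\Spec B$ of $H$ around $\mathfrak{i}$ and analyze the ideal sheaf $\gI_B$.

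**Next** I would produce the candidate $\phi_B$. Since $\gI_Z$ is an ideal sheaf on $H \times \pn$, flat over $H$, one knows $(I_Z)_{\geq d_0}$ for $d_0 \gg 0$ is a flat graded $\gO_H$-module (its fibres are $I(d)$ for $d \geq d_0$, which have constant dimension since the Hilbert polynomial is fixed and $I$ is saturated near $\mathfrak i$). Over the affine chart this gives a graded $B$-module, and by lifting a minimal generating set of $I$ at the special point (possible after shrinking $\Spec B$, by Nakayama applied to the finitely many relevant graded degrees) I get a map $G_0 \otimes_k B \to I_B$. Then the kernel $K$ of $G_0 \otimes_k B \to S_B$ is again, after shrinking $\Spec B$ so that the relevant graded pieces have locally free rank equal to that at $\mathfrak i$, a graded free $B$-module on which I can choose a basis lifting a minimal basis of $G_1$ at $\mathfrak i$; this defines $\phi_B : G_1 \otimes_k B \to G_0 \otimes_k B$. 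By construction $\phi_B \otimes_B k(\mathfrak i) = \phi$. The point is that the Fitting-ideal / rank conditions defining "$\codim \geq 2$ and the Buchsbaum--Rim complex of $\phi_B$ is a resolution" are open, and they hold at $\mathfrak i$; combined with $n \geq 3$ (which guarantees the expected codimension two locus is nonempty and the Hilbert--Burch theorem applies fibrewise), on a possibly smaller $U = \Spec B$ the complex $G_1 \otimes_k B \xrightarrow{\phi_B} G_0 \otimes_k B \to S_B$ is exact with cokernel $S_B/I_B'$ where $I_B'$ is the ideal of maximal minors of $\phi_B$.

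**Then** I would check $I_B' = I_B$ and flatness. Fibrewise, $I_B' \otimes k(\mathfrak p)$ is an ACM codimension two ideal in $S_{k(\mathfrak p)}$ with the prescribed resolution, hence with the prescribed Hilbert function $\mathbf d$; in particular it is saturated and equal to the saturated ideal of its vanishing scheme, which is the fibre of $Z$. So $I_B' \otimes k(\mathfrak p)$ and $I_B \otimes k(\mathfrak p)$ agree in all sufficiently large degrees, and since $I_B'$ is the ideal of global sections of its own sheafification (because $S_B/I_B'$ is, degreewise, a locally free $B$-module of the expected rank and base change commutes), one gets $I_B' = I_B$ after shrinking once more. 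Flatness of $I_B$ over $B$ then follows because each graded piece $(I_B)_d$ is, by the exactness of the finite free complex in that degree, a kernel of a map of free $B$-modules with locally free cokernel, hence free; alternatively, $S_B/I_B$ is flat degreewise by the constancy of Hilbert functions. The resolution $I_B \vpil G_0 \otimes_k B \xrightarrow{\phi_B} G_1 \otimes_k B$ is the one just built, and its fibre at $\mathfrak i$ is the original resolution of $I$.

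**The main obstacle** is the bookkeeping that makes "shrink $\Spec B$" legitimate at each stage: one needs that finitely many conditions (minimality of generators at $\mathfrak i$, local freeness of finitely many graded pieces, the Fitting-rank/codimension conditions, exactness of the Buchsbaum--Rim complex in finitely many relevant degrees) are each open and simultaneously satisfiable, which uses semicontinuity of fibre dimensions, Nakayama in finitely many degrees, and the finiteness of the range of degrees where anything happens (regularity bounds depending only on $\mathbf d$). The subtlety is that $I_B$ — the honest graded global sections — can a priori fail to base-change or be flat, so the argument must go through the minors ideal $I_B'$ of the explicitly constructed $\phi_B$, and only at the end identify it with $I_B$; getting this identification cleanly, rather than just "in large degrees", is where care is needed.
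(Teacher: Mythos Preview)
Your overall strategy---construct $\phi_B$ by lifting, take $I_B'$ to be its ideal of maximal minors, then identify $I_B'$ with $I_B$---is different from the paper's, which first proves $I_B$ is flat and commutes with base change via the Cohomology and Base Change Theorem, and only then lifts the resolution. Your route could in principle work, but there is a genuine gap at the very first step, and it is precisely the step you flag as subtle at the end.

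You write that you obtain a map $G_0\otimes_k B\to I_B$ ``by lifting a minimal generating set of $I$ at the special point (possible after shrinking $\Spec B$, by Nakayama\ldots)''. But Nakayama does not give you this: to lift $f_j\in I_{a_j}$ to some $\tilde f_j\in (I_B)_{a_j}=\Gamma(\pn_B,\gI_B(a_j))$ you need the base change map
\[
p_*\gI_B(a_j)\otimes_B k(\mathfrak i)\longrightarrow H^0\bigl(\pn_{k(\mathfrak i)},\gI_{k(\mathfrak i)}(a_j)\bigr)=I_{a_j}
\]
to be surjective. This is exactly the $H^0$ base change statement you say you want to avoid, and it is not automatic in the low degrees where the generators live (only in degrees $\geq d_0$). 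A lift of $f_j$ to $S_B$ always exists, but such a lift need not lie in $I_B$: it need not vanish on nearby fibres of the universal family. So your construction of $\phi_B$ already presupposes what you are trying to prove, and the later identification $I_B'=I_B$ cannot repair this, because $I_B'$ is built from the $\tilde f_j$.

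Relatedly, your explanation of where $n\geq 3$ enters is not right. Hilbert--Burch applies fibrewise as soon as depth is at least $2$, so $n\geq 1$ would suffice for that. The actual role of $n\geq 3$ is cohomological: sheafifying the resolution $0\to G_1\to G_0\to I\to 0$ and running the long exact sequence, one gets $H^1(\gI_{k(\mathfrak i)}(d))$ trapped between $H^1$ and $H^2$ of sums of line bundles on $\pn$, both of which vanish for all $d$ precisely when $n\geq 3$. This $H^1$ vanishing, together with semicontinuity and CBCT, is what forces $H^0$ base change to hold in \emph{all} degrees on a neighbourhood of $\mathfrak i$, hence makes $I_B$ flat and makes the lift $G_0\otimes_k B\to I_B$ possible. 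Once you insert this cohomological input, your argument and the paper's essentially converge.
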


% %{\color{red} In this proof the symbol $i$ denotes two different things.   }
\begin{proof}
%Let $A$ be the local ring of the Hilbert scheme at the point $i$.
Let $\Spec B \sus H$ 
be an open affine neighbourhood of {$\mathfrak i$}.
We get an ideal sheaf $\gI_B$ in
$\pn_B$. 
For each $i = 1, \ldots, n$ there is a $d(i)$ such that $R^ip_* \gI_B(d)$ 
vanishes for $d \geq d(i)$, by \cite[Thm. III.5.2]{Ha}.
Let $d_0$ be the maximum of the $d(i)$. 

By the Cohomology and { Base} Change Theorem (CBCT)  \cite[Thm. III.12.11]{Ha} 
part b. we get that $p_* \gI_B(d)$ is a locally free (or flat) $B$-module
for $d \geq d_0$,
and that all the cohomology modules $H^i(\pn, \gI_{k({\mathfrak b})}(d))$ vanish
for ${\mathfrak b} \in \Spec B$ when $i > 0$ and $d \geq d_0$.

Also note that $p_* \gI_B(d)$ and all the cohomology modules
$H^0(\pn_{k(\mathfrak b)}, \gI_{k({\mathfrak b})}(d))$ vanish for $d < 0$ since they 
are submodules of $p_* \gO_{\pn_B} (d)$ and 
$\kk({\mathfrak b})[x_0, \ldots, x_n ]_d$ 
respectively.

The fibre ideal $\gI_{k({\mathfrak i})}$ has $I$ as its associated graded ideal.
Since $n \geq 3$, by running the long exact cohomology sequence on 
the sheafification of the resolution of $I$, we get the vanishing of 
$H^1(\pn_{k({\mathfrak i})}, \gI_{k({\mathfrak i})}(d))$ for all $d$.
By semi-continuity of cohomology, there is an open subset
$\Spec B_f$ of $\Spec B$, containing ${\mathfrak i}$, such that the 
$H^1(\pn_{k({\mathfrak b})}, \gI_{k({\mathfrak b})}(d))$ 
vanish
for ${\mathfrak b}$ in $\Spec B_f$ and $d = 0, \ldots, d_0$. But then
we know by the above that they also vanish for all $d \geq 0$. 

{By part a. of CBCT  }
the maps
\[ R^1 p_*(\gI_{B_f}(d)) \te_B k({\mathfrak b})
\pil H^1(\pn_{k({\mathfrak b})}, \gI_{k({\mathfrak b})}(d)), \]
being surjective, will be isomorphisms for ${\mathfrak b}$ in $\Spec B_f$. 
Hence by {Nakayama's} lemma, we obtain the vanishing of $R^1 p_*(\gI_{B_f}(d))$
for all $d \geq 0$.

By part b. of CBCT (applied when $i = 1$) the maps
\[ R^0 p_*(\gI_{B_f}(d)) \te_{B_f} k({\mathfrak b}) \pil 
H^0(\pn_{k({\mathfrak b})},\gI_{k({\mathfrak b})}) \]
are also surjective for $d \geq 0$ and ${\mathfrak b}$ in $\Spec B_f$.
Applying CBCT again (when $i =0$), these maps   
are isomorphisms, and $R^0 p_* (\gI_{B_f}(d))$ is locally free
in a neighbourhood of any such ${\mathfrak b}$, and so on all of $\Spec B_f$. 
Hence the graded ideal $I_{B_{f}}$ is a
flat $B_{f}$-module.

\medskip
For ease of notation denote $B_{f}$ further on simply as $B$. 
We may lift the start of the resolution of $I$
to a diagram
\[ \begin{CD}
I_B @<{p_B}<< G_0 \te_{\kk} B \\
@VVV @VVV \\
I @<{p}<< G_0.
\end{CD} \]
The cokernel of $p_B$ vanishes in the fibre at ${\mathfrak i} 
\in \Spec B$. Since we are in a noetherian setting, $\coker p_B$
has a finite set of generators over $B[x_0, \ldots, x_n]$.
By taking a suitable localization $B_f$ (by abuse of notation we 
still denote it by $B$), all these generators 
vanish and so $p_B$
is surjective. Since $I_B$ and $G_0 \te_{\kk} B$ are $B$-flat, the
kernel of $p_B$ will be $B$-flat, {hence } 
\[ 0 \vpil I_B \te_B k({\mathfrak i})  \vpil (G_0 \te_{\kk} B) \te_B 
k({\mathfrak i})
\vpil (\ker p_B \te_B k({\mathfrak i}))  \vpil 0\] is
exact and so the right term in this sequence is equal to $\ker p$. 
We may now continue the process and lift to a diagram
\[ \begin{CD}
I_B @<{p_B}<< G_0 \te_{\kk} B @<{\phi_B}<< G_1 \te_{\kk} B\\
@VVV @VVV @VVV\\
I @<{p}<< G_0 @<{\phi}<< G_1
\end{CD} \]
and by using flatness we see that the upper
sequence is exact after localizing $B$ suitably.

\end{proof}

\begin{corollary} \label{NecCorACMcomp}
If a component of the Hilbert scheme contains an ACM ideal of 
codimension two, the general point on the component will be an 
ACM codimension two ideal with the same Hilbert function.
\end{corollary}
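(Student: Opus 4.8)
The plan is to deduce the corollary directly from Proposition \ref{NecProJB}. Let $W$ be a component of the Hilbert scheme $H$ that contains an ACM codimension two ideal, say corresponding to a point $\mathfrak{i} \in W$, with minimal free resolution $I \vpil G_0 \vpil G_1$. By Proposition \ref{NecProJB} there is an open affine $U = \Spec B \sus H$ containing $\mathfrak{i}$ on which the graded ideal of global sections $I_B$ is $B$-flat and admits a resolution $I_B \vpil G_0 \te_\kk B \vpil G_1 \te_\kk B$ specializing at $\mathfrak{i}$ to the resolution of $I$. First I would observe that $U \cap W$ is a nonempty open subset of the irreducible variety $W$, hence dense in $W$; so it suffices to check that every point of $U \cap W$ parametrizes an ACM codimension two subscheme with the same Hilbert function as $X$.

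Next I would argue that the Hilbert function is constant on $U$. Since $I_B$ is flat over $B$, each graded piece $(I_B)_d$ is a locally free $B$-module; the flat family $Z \cap (U \times \pn) \to U$ forces the Hilbert \emph{polynomial} to be constant, and the vanishing of $H^1(\pn_{k(\mathfrak{b})}, \gI_{k(\mathfrak{b})}(d))$ for all $d \geq 0$ and all $\mathfrak{b} \in U$ (established in the proof of Proposition \ref{NecProJB}) shows that the quotient $S_{k(\mathfrak{b})}/\gI_{k(\mathfrak{b})}$ has Hilbert function equal to the Hilbert polynomial for $d \gg 0$ and, in fact, equal to that of $X$ in every degree, because the resolution with terms $G_i \te_\kk B$ has the same graded Betti numbers as the resolution of $I$ fibrewise. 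So for each $\mathfrak{b} \in U$ the fibre ideal has the same Hilbert function as $I$.

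Then I would conclude the ACM property: by the same base-change reasoning the specialized complex $G_0 \te_\kk k(\mathfrak{b}) \vpil G_1 \te_\kk k(\mathfrak{b})$ remains exact (the kernel of each map in the resolution is $B$-flat, so exactness is preserved after tensoring with any residue field), hence it is a length-two free resolution of $\gI_{k(\mathfrak{b})}$ as a graded module over $S_{k(\mathfrak{b})}$. Since the saturated ideal has a free resolution of length two, $S_{k(\mathfrak{b})}/\gI_{k(\mathfrak{b})}$ is Cohen--Macaulay of codimension two, i.e.\ the fibre is an ACM codimension two subscheme. Thus every point of the dense open $U \cap W$ is of the required type, and therefore so is the general point of $W$.

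The main obstacle is not any single deep step but rather being careful that the constancy of the Hilbert function holds in \emph{all} degrees, not merely asymptotically: one must use the universal flat family to pin down the Hilbert polynomial and the uniform $H^1$-vanishing from Proposition \ref{NecProJB} to rule out any discrepancy in low degrees. Once that is in place, the ACM statement and the density argument are formal.
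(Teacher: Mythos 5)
Your proposal is correct and follows essentially the same route as the paper, which states the corollary as an immediate consequence of Proposition \ref{NecProJB}: flatness of $I_B$ with the fibrewise resolution by $G_0\otimes_\kk B \leftarrow G_1\otimes_\kk B$ gives both constancy of the Hilbert function and the length-two resolution (hence the ACM codimension two property) at every point of $U$, and density of $U\cap W$ in the irreducible component $W$ finishes the argument. The only point worth making explicit is that $I_B\otimes_B k(\mathfrak b)$ is the saturated homogeneous ideal of the fibre for \emph{every} $\mathfrak b\in U$ (not just at $\mathfrak i$), which is exactly what the cohomology-and-base-change step in the proof of Proposition \ref{NecProJB} provides.
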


%Hence we see that if the Hilbert scheme 
%component contains an ACM scheme of codimension
%two, the general scheme on the component will be also ACM of codimension two.
Such a component will be called { an} ACM codimension two 
component of the Hilbert scheme.

\subsection{Borel ideals} We assume in the following that 
our field $\kk$ has characteristic zero.

A monomial ideal is called a strongly stable ideal
if whenever a monomial $x_jm \in J$ and $i < j$, then the monomial
$x_im \in J$. As $ch(\kk)=0$ this is equivalent to $J$ being  a Borel ideal, namely  invariant for the Borel
subgroup  of $GL(n+1)$ consisting of the upper triangular matrices
(when the linear forms have the ordered  basis $(x_0, x_1, \ldots, x_n)$).
The Borel ideals which are ACM of codimension two are easy to describe.
They are given by their minimal generators as
\begin{equation}  \label{NecLigJ}
 J(a,\bfb) = (x_0^a, x_0^{a-1}x_{1}^{b_1}, x_0^{a-2}x_{1}^{b_2}, 
 \ldots, x_{1}^{b_a})
\end{equation}
where $0 = b_0 < b_1 < \cdots < b_a$. 
Note that  $\frac{1}{(n-2)!}(b_1+\dots +b_a)$ is the leading term of the Hilbert polynomial of $P/J(a,\bf b)$.

%\bcV
If $I$ is any  homogeneous ideal, and we take its generic initial ideal
$\gin{I}$ for the revlex order, then $\gin{I}$ will i) 
be a Borel ideal \cite[15.9]{Eisenbud}, and ii) have the same
depth as $I$, by \cite{BaSt}. Hence if $I$ is ACM of codimension two,
its generic initial ideal will be a Borel ACM codimension two ideal and so
have the form \ref{NecLigJ} above. 
Moreover, $I$ is saturated and has the  same Hilbert function than $J(a,\bfb)$.

Let us now collect the following facts.
\begin{itemize}
\item[1.] Each ACM codimension two component contains a Borel ideal 
of the type $J(a,\bfb)$
(by the above argument).
\item[2.] Such a Borel ideal is a smooth point {on the Hilbert scheme}, 
\cite{El}, and
hence is on a single component.
\item[3.] Distinct ideals $J(a,\bfb)$ in \ref{NecLigJ} 
are on distinct components. This follows by the above
Corollary \ref{NecCorACMcomp}
% or the new Proposition \ref{marked scheme}
% 
since it is easy to see that distinct pairs $(a,\bfb)$ will 
give distinct
Hilbert functions.
\end{itemize}

In conclusion we get the following well known fact.
\begin{proposition}
There is a one-to-one correspondence between ACM codimension two
components of the Hilbert scheme and ideals $J(a,\bfb)$.
\end{proposition}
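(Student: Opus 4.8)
The plan is to assemble the final proposition purely from the three itemized facts just stated, since together they give both a map and its inverse. First I would make the correspondence explicit: to each ACM codimension two component $C$ of the Hilbert scheme, associate the ideal $J(a,\bfb)$ obtained as follows. By fact 1 (which rests on Corollary \ref{NecCorACMcomp} together with the generic initial ideal argument: the general point of $C$ is ACM of codimension two, and its revlex generic initial ideal is a Borel ideal of the same depth, hence ACM of codimension two, hence of the form \ref{NecLigJ}), the component $C$ contains at least one ideal of the type $J(a,\bfb)$. The content of fact 2, quoting \cite{El}, is that such a $J(a,\bfb)$ is a smooth point of the Hilbert scheme, so it lies on a \emph{unique} component; this guarantees that the component $C$ containing it is well-defined, but I still need the assignment $C \mapsto J(a,\bfb)$ itself to be well-defined, i.e. that $C$ cannot contain two distinct ideals of type \ref{NecLigJ}.

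That well-definedness is exactly fact 3: by Corollary \ref{NecCorACMcomp}, all points of $C$ in the ACM locus, in particular any two Borel ideals of type $J(a,\bfb)$ lying on $C$, share the same Hilbert function; since distinct pairs $(a,\bfb)$ yield distinct Hilbert functions (an elementary check: from the Hilbert function of $S/J(a,\bfb)$ one reads off $a$ from the eventual constant difference behaviour and then recovers each $b_i$ in turn), two such ideals on $C$ must coincide. So $C \mapsto J(a,\bfb)$ is a well-defined map from ACM codimension two components to ideals of the form \ref{NecLigJ}. For the inverse direction, send $J(a,\bfb)$ to the unique component containing it, which exists and is unique by fact 2. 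I would then check the two composites are the identity: starting from a component $C$ we pick its associated $J(a,\bfb)\subset C$ and the unique component through $J(a,\bfb)$ is $C$ again; starting from $J(a,\bfb)$ we take its unique component $C$, and the ideal associated to $C$ is a $J(a',\bfb')$ on $C$, which by fact 3 must equal $J(a,\bfb)$.

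There is essentially no hard step here; the proposition is a formal consequence of material already in place. If anything is an obstacle it is making sure that fact 1 genuinely produces an ideal \emph{of type \ref{NecLigJ}} on the component (as opposed to merely some ACM Borel ideal) — but this is handled by the remark in the text that ACM Borel ideals of codimension two are exactly those of the form \ref{NecLigJ}, together with the Bayer–Stillman depth invariance \cite{BaSt} of the generic initial ideal. I would therefore write the proof in two short paragraphs: one defining the map and verifying it is well-defined (citing facts 1 and 3), one defining the inverse and checking bijectivity (citing fact 2), and conclude.
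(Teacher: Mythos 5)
Your proposal is correct and follows essentially the same route as the paper: the paper's proof consists precisely of the three listed facts (existence of a $J(a,\bfb)$ on each component via the generic initial ideal, uniqueness of the component through $J(a,\bfb)$ via Ellingsrud's smoothness, and injectivity via Corollary \ref{NecCorACMcomp} together with distinct Hilbert functions for distinct $(a,\bfb)$), which you have simply assembled into an explicit map and inverse. Nothing further is needed.
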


Now we shall investigate Borel fixed ideals on the component of 
$J(a,\bfb)$. Let us start with an example.

\begin{example}
Twisted cubic curves in $\ptre$ are ACM curves with Hilbert polynomial 
$3d+1$. The corresponding Borel ideal is 
\[ J = (x_0^2, x_0x_1, x_1^2). \]
It is not difficult to show that the Borel ideal 
{ \[ I = (x_0^2, x_0x_1, x_0x_2, x_1^3) \]} is on the component. 
The ideal
\[ K = (x_0, x_1^3x_2, x_1^4) \]
is the lex segment ideal with Hilbert polynomial $3d+1$. It is a smooth
point {on  the Hilbert scheme and the single component containing it }is
different from the one of $J$.

Another way to see this is the general theorem below which implies that
if a Borel ideal is a degeneration of the ideal of a twisted cubic curve
then it must contain $x_1^3$. 
\end{example}

\begin{theorem} \label{NecTheMain}
Let  $J$ be a Borel ideal on the Hilbert scheme component of $J(a,\bfb)$.
Let $d_s = \sum_{i = 1}^s b_i$. Then $x_0^{a-s}x_{1}^{d_s}$ is in $J$ for
each $s = 0, \ldots, a$. 
\end{theorem}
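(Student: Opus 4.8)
The plan is to work with the degenerating family directly, using Proposition~\ref{NecProJB}. Since $J$ lies on the component of $J(a,\bfb)$, there is an irreducible curve on the Hilbert scheme through $J$ whose generic point is an ACM codimension two ideal; after shrinking to an open affine $\Spec B$ as in Proposition~\ref{NecProJB} and passing to a DVR $\mathcal{O}$ with residue field $k$ and fraction field $K$ mapping to this curve so that the generic point goes to an ACM ideal $I_K$ (with $\gin{I_K} = J(a,\bfb)$) and the closed point goes to $J$, we get a flat family of graded ideals $I_{\mathcal{O}}$ over $\mathcal{O}$ with $I_{\mathcal{O}} \otimes K = I_K$ and $I_{\mathcal{O}} \otimes k = J$, and a resolution $I_{\mathcal{O}} \leftarrow G_0 \otimes_k \mathcal{O} \xleftarrow{\phi_{\mathcal{O}}} G_1 \otimes_k \mathcal{O}$ specializing at both fibres to the respective resolutions.

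The key step is to track the map $\phi_{\mathcal{O}}$. Over $K$ the resolution of $I_K$ has the same graded Betti numbers as that of $J(a,\bfb)$ (this is forced by flatness and the ACM hypothesis), so $G_0$ has rank $a+1$ with generators in degrees $a, a-1+b_1, \ldots, b_a$ (the degrees of the minimal generators in \ref{NecLigJ}), and $\phi_{\mathcal{O}}$ is represented by an $(a+1)\times a$ matrix of forms over $\mathcal{O}$ with a fixed degree pattern: on the diagonal, up to sign, powers $x_0^{c_i}$ where $c_i$ measures the jump, and below the diagonal entries of the appropriate (positive) degree, and $0$ above. The ideal $I_K$ is generated by the maximal minors of this matrix (the Hilbert--Burch theorem), and so is $I_{\mathcal{O}}$ up to the radical, and so is $J$ as the special fibre. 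Now I would compute the minor obtained by deleting the $(s+1)$-st row of the Hilbert--Burch matrix: by the degree pattern this minor is, up to a unit, $x_0^{a-s}$ times a product of the sub-diagonal/diagonal blocks, and its degree is exactly $a - s + d_s$ where $d_s = \sum_{i=1}^s b_i$ is the sum of the first $s$ generator-degree jumps in the $x_1$-exponent. Since this minor lies in $I_{\mathcal{O}}$, its image in $J$ is a nonzero element of $J$ of degree $a-s+d_s$ lying in the principal-$x_0$ locus, and I want to conclude it is divisible-after-Borel-move down to $x_0^{a-s}x_1^{d_s}$.

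The cleanest way to finish is probably not via the matrix at all but via Hilbert functions plus the Borel property, using the matrix computation only to pin down which monomial must appear. Concretely: $J$ and $J(a,\bfb)$ have the same Hilbert polynomial and (being on the same component, with the generic point ACM of the same cohomology) one can compare Hilbert functions in each degree; combined with the Borel structure of $J$ this should force, in degree $a-s+d_s$, that the monomial $x_0^{a-s}x_1^{d_s}$ — which is the revlex-largest monomial of that degree divisible by $x_0^{a-s}$ but not $x_0^{a-s+1}$ — lies in $J$, since otherwise a dimension count in that graded piece (comparing with $J(a,\bfb)$, where this monomial is exactly the minimal generator $x_0^{a-s}x_1^{d_s}$ when $s$ ranges, or is inside $J(a,\bfb)$ otherwise) is violated. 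I expect the main obstacle to be exactly this bookkeeping: verifying that the degree $a-s+d_s$ and the monomial $x_0^{a-s}x_1^{d_s}$ interact with the staircase of $J(a,\bfb)$ so that $h_{J(a,\bfb)}$ in that degree, restricted to the part of the monomial basis not divisible by $x_0^{a-s+1}$, leaves no room; once that combinatorial identity is in hand, semicontinuity of the Hilbert function on the flat family and the Borel property of $J$ close the argument. I would handle the edge cases $s=0$ (where $x_0^a \in J$ trivially, as $J$ must contain a power of $x_0$ of degree $\le a$, and degree reasons give exactly $a$) and $s=a$ (where $d_a = b_a$ and $x_1^{b_a}$ is the top generator) separately.
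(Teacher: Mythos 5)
There is a genuine gap, in two places. First, your matrix step does not produce the element you need. The deformed Hilbert--Burch matrix over $\mathcal{O}$ is \emph{not} triangular with monomial diagonal: the degree pattern of the Betti numbers only fixes the degrees of the entries, and entries above the ``diagonal'' of the special matrix get filled in by general forms of those degrees (the paper's own families $A(F)$, with a general form $F$ of degree $l+m-1$ in the corner, illustrate exactly this). More importantly, the maximal minor obtained by deleting the $(s+1)$-st row has degree $a-s+b_s$ (it specializes to the generator $x_0^{a-s}x_1^{b_s}$), not $a-s+d_s$; so no maximal minor of the Hilbert--Burch matrix has the degree you claim, and deleting rows cannot yield an element of degree $a-s+d_s$ with all monomials of $x_0$-degree $\le a-s$. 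The paper's proof needs a different determinant: after using $F_0$ to bound the $x_0$-powers, write $[F_0,\ldots,F_a]^t = W\cdot[x_0^a,\ldots,x_0,1]^t$ with $W$ a matrix over $B[x_1,\ldots,x_n]$ whose diagonal entries are $x_1^{b_i}+E_{i,i}$, take the upper-left $(s+1)\times(s+1)$ block $W_s$ (so $\det W_s$ has degree $d_s$), and multiply by its cofactor matrix to get $G_s=\sum_i (V_s)_{s,i}F_i \equiv x_0^{a-s}\det W_s$ modulo terms of strictly smaller $x_0$-degree. It is this polynomial combination of the minors --- not a maximal minor --- that is homogeneous of degree $a-s+d_s$ with every monomial of $x_0$-degree $\le a-s$; specializing (after normalizing) forces some such monomial into the monomial ideal $J$, and Borel moves then give $x_0^{a-s}x_1^{d_s}\in J$. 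This construction is the heart of the theorem and is absent from your argument.

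Second, the fallback you propose cannot close the gap. Semicontinuity only gives the inequality $h_J(d)\ge h_{J(a,\bfb)}(d)$, and that inequality together with the Borel property and equality of Hilbert polynomials does \emph{not} imply the monomial condition: the lexicographic segment ideal with the same Hilbert polynomial is Borel and has the largest possible Hilbert function in every degree, yet it fails the conclusion (for the twisted cubic, $K=(x_0,x_1^3x_2,x_1^4)$ does not contain $x_1^3=x_1^{d_2}$; likewise $J_1,\ldots,J_4$ in the paper's $(l,m)=(1,3)$ example fail to contain $y^5$). This is exactly why the paper records the monomial condition of Theorem~\ref{NecTheMain} and the Hilbert-function condition C2 as two logically independent necessary criteria; no ``dimension count in degree $a-s+d_s$'' based only on $h_J\ge h_{J(a,\bfb)}$ can single out the monomial $x_0^{a-s}x_1^{d_s}$. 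So the finish must come from the explicit element $G_s$ in the flat family (your DVR/Proposition~\ref{NecProJB} setup is fine for that), not from Hilbert functions.
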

%{\color{red} The letter $j$ denotes two things}
\begin{proof} We apply Proposition \ref{NecProJB} to the ideal 
$I = J(a,\bfb)$. 
By the Hilbert-Burch theorem the ideal {$I_B$} of Proposition
\ref{NecProJB} is generated by the minors of the matrix $\phi_B$. 
Denote these minors as
\[ {F}_0, {F}_1, \ldots {F}_a. \]
Let $A$ be the local ring at the point ${\mathfrak i}$ in $\Spec B$ 
corresponding to 
$I$.  Considering the $F_i$'s over this local ring we may 
write
\[{F}_i = x_0^{a-i}x_{1}^{b_i} + \sum_j c_{i,j}M_{i,j} \]
where the $M_{i,j}$ are monomials in $\kk[x_0, \ldots, x_n]$ of
degree $a-i + b_i$ and the $c_{i,j}$ are in the maximal ideal
of $A$.
By subtracting multiples of ${F}_0$ we may assume that 
$x_0^a$ is the highest power of $x_0$ occuring in any of the
${F}_i$. Then there will be an open subset $\Spec B_f 
\sus \Spec B$ such that considering the $F_i$ over $B_f$ 
we may for each $i$ write
\[ F_i = x_0^{a-i}x_{1}^{b_i} + \sum_{j=0}^{a} x_0^{a-j}E_{i,j} \]
where the $E_{i,j}$ are polynomials in the variables 
$x_1, \ldots, x_{n}$.
We can then write the transpose 
\[ [F_0, \ldots, F_a]^t = W \cdot [x_0^a, \ldots, x_0, 1]^t \]
where $W$ is an $(a+1) \times (a+1)$ matrix with entries
$E_{i,j}$ in position $(i,j)$ when $i\neq j$ and $x_{1}^{b_i} + E_{i,i}$ 
when $i = j$. For simplicity of notation we denote $B_f$ further on
by $B$. Note that when localizing at ${\mathfrak i}$, all coefficients of
the $F_i$, save the first, are in the maximal ideal of $A$. 

Let $W_s$ be the upper left $(s+1) \times (s+1)$ submatrix of $W$.
Modulo the $B[x_1, \ldots, x_{n}]$-submodule 
$\oplus_{i = 0}^{a-s-1}  x_0^{i}B[x_1, \ldots, x_{n}]$ we get
\[ [F_0, \ldots, F_s]^t \equiv W_s \cdot [x_0^a, \ldots, x_0^{a-s}]^t.  \]
By considering the diagonal of $W_s$ we see that its determinant 
has degree $d_s = \sum_{i = 1}^s b_i$. Also when localizing
at ${\mathfrak i}$ all coefficients of entries on the diagonal are
units of $A$, while those off the diagonal are in the maximal ideal of $A$.
Hence $\det W_s$ is nonzero.  
Let $V_s$ be the matrix of cofactors of $W_s$, so that $V_s \cdot W_s = 
(\det W_s)\cdot I$. 
Then
\[ V_s \cdot [F_0, \ldots, F_s]^t \equiv 
(\det W_s)\cdot  [x_0^a, \ldots, x_0^{a-s}]^t. \]
Hence the last entries in these products are 
\[ G_s := \sum_{i = 0}^s (V_s)_{s,i} F_i \equiv x_0^{a-s} \cdot (\det W_s).\]
Note that the right side has degree $a-s + d_s$. 

\medskip
Now any saturated monomial ideal  
$J$ corresponding to a point in the closure of the open subset $U = \Spec B$
must contain some monomial in $G_s$. 
This will be a monomial of degree $a-s + d_s$ and with $x_0$-degree
$\leq a-s$. So if $J$ is Borel fixed for the ordering
$x_0 > x_{1} > \cdots > x_n$ of the variables, 
it must then contain $x_0^{a-s}x_{1}^{d_s}$. 
\end{proof}

\begin{corollary}
If $a\geq 2$ and $J$ is a Borel ideal that belongs to the component of  
$J(a,\bf b)$, then $x_0\notin J$.
\end{corollary}
\begin{proof}
Let us assume that $ J$ is a strongly stable ideal corresponding to a point of 
the ACM component  of $J(a,\bf b)$. If $x_0 \in J$, then, by the previous 
result  $J$ would contain the ideal $J':= (x_0, x_1^{d_a})$, which defines 
a scheme with  the same codimension  and  degree  as $J(a,\bf b)$. 
But every ideal properly containing  $J'$ defines a scheme with degree lower than 
$d_a$. Hence $J$ cannot include $x_0$.
\end{proof}

\section{Examples and conjectures}

We now consider ACM curves in $\ptre$ and their Hilbert scheme components. 
This section will systematically
investigate example cases where the curves on such a component 
is on a quadric and find all Borel ideals on such a component.

\subsection{ACM ideals on a quadric}
   Since now $n = 3$  the polynomial ring $S$ is   $ \kk[x,y,z,w]$. 
Ordering the variables as $x > y > z > w$, an ACM Borel ideal on a quadric
may then be written as 
\[ \Bo(l,m) = (x^2, xy^l, y^{l+m}). \]
Denote its Hilbert scheme component as $H(l,m)$.
The resolution of $\Bo(l,m)$ is 
\begin{eqnarray*}
 \Bo(l,m) & \xleftarrow{\left [ \begin{matrix} x^2, - xy^l, y^{l+m}
                   \end{matrix} \right ]} & 
S(-2) \oplus S(-l-1) \oplus S(-l-m) \\
&  \xleftarrow{\left [ \begin{matrix} y^l & 0 \\
                              x & y^m \\
                              0   & x
                   \end{matrix} \right ]} & 
S(-l-2) \oplus S(-l-m-1). 
\end{eqnarray*}

\begin{example} The case $l = 1$ and $m = 3$. The Hilbert polynomial is
$5t-2$. There are $7$ (saturated) Borel ideals on the Hilbert scheme
$\hilb^3_{5t-2}$ and these are:
\begin{itemize}
	\item $J_1 = (x, y^6, y^5 z^3)$
	\item $J_2 = (x, y^7, y^6 z, y^5 z^2)$
	\item $J_3 = (x^2, x y, x z, y^6, y^5 z^2)$
	\item $J_4 = (x^2, x y, x z^2, y^6, y^5 z)$
	\item $J_5 = (x^2, x y, x z^3, y^5)$
	\item $J_6 = (x^2, x y^2, x y z, x z^2, y^5) $
	\item $J_7 = (x^2, x y, y^4).$
\end{itemize}

Which of these Borel ideals are on the component $H(1,3)$ of the ACM Borel ideal
$J_7$? By Theorem \ref{NecTheMain} a necessary condition is 
that $xy$ and
$y^5$ are in the Borel ideal. 
This leaves only the possibility of $J_5$ (and of course
$J_7$) to be on $H(1,3)$.
By the Hartshorne result on the connectedness of the Hilbert scheme and the fact that every intersection of components 
contains at least a Borel ideal, we can conclude that at least another Borel ideal in on  $H(1,3)$. Hence, $J_7\in H(1,3)$.
We can also obtain an explicit deformation 
from a smooth point of $H(1,3)$ to $J_5$,   
by letting $F = z^3$ in \ref{ExLigTominor}.  We get the ideal 
$I = (x^2, xy, y^4 + xz^3)$ that  corresponds to a smooth point on $H(1,3)$. 
The initial ideal of $I$ with respect to the lexicographic order is 
$J_5$ (and with respect to the reverse lexicographic order {it} is $J_7$). 
 In conclusion we have
established that the only Borel ideals on $H(1,3)$ are $J_5$ and $J_7$.
\end{example}

\subsection{Conjectures}
 In the above example there were two obstructions which ruled out a saturated
Borel ideal from being on the ACM component. By Theorem \ref{NecTheMain}
we must have:

\begin{itemize}
\item[C1.] If a (saturated) Borel ideal $J$ is on the component $H(l,m)$
then $x^2$, $xy^l$, $y^{2l+m} \in J$. 
\end{itemize}

By semi-continuity of the cohomology of coherent sheaves \cite[III.12]{Ha} 
we must have:

\begin{itemize}
\item[C2.] If a (saturated) Borel ideal $J$ is on the component $H(l,m)$
then the hilbert function   $h_{S/J}(d) \leq h_{S(J(l,m))}(d)$ for all $d \geq 1$. . 
\end{itemize}

In all the examples we have computed,
these are the only two obstructions we have
found.
 We therefore make the following.
\begin{conjecture} \label{ExConjObs}
If a saturated Borel ideal $J$ has the same Hilbert polynomial
as $\Bo(l,m) = (x^2, xy^l, y^{l+m})$, 
then it is on the Hilbert scheme component of $\Bo(l,m)$
if and only if i) $x^2$, $xy^l,  y^{2l+m} \in J$  and 
ii) the Hilbert function
 $h_{S/J}(d) \leq h_{S(J(l,m))}(d)$ for all $d \geq 1$. 
%\ecr
\end{conjecture}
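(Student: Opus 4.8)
The necessity of (i) and (ii) is essentially already in hand. Writing $\Bo(l,m)=J(2;(l,l+m))$ in the notation of \ref{NecLigJ}, we have $a=2$, $d_1=b_1=l$ and $d_2=b_1+b_2=2l+m$, so Theorem~\ref{NecTheMain} gives that any Borel ideal on $H(l,m)$ contains $x^2$, $xy^l$ and $y^{2l+m}$; in particular (i) holds, and (ii) is the semi-continuity statement C2 obtained from Proposition~\ref{NecProJB}. Thus the real content of the conjecture is the converse, and the plan is to realize a given $J$ as an explicit flat (Gr\"obner) degeneration of an ACM curve lying on $H(l,m)$.

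The first step is to show that (i) and (ii) already force $x^2,xy^l\in J$, so that one starts from a Borel ideal of controlled shape. Since $x^2\in\Bo(l,m)_2$ we have $h_{\Bo}(2)\ge 1$, so $h_J(2)\ge1$ and hence $x^2\in J$ because $J$ is Borel. Then $(x^2)\subseteq J$, while $xy^l\in\Bo(l,m)_{l+1}\setminus(x^2)$ gives $h_{\Bo}(l+1)\ge \dim(x^2)_{l+1}+1=\binom{l+2}{3}+1$; comparing with $h_J(l+1)$ forces $J$ to contain a monomial of degree $l+1$ and $x$-degree $\le1$, and a single Borel exchange (raising $z,w$ to $y$, and using $y^{l+1}=y\cdot y^l\Rightarrow xy^l\in J$) propagates this to $xy^l\in J$. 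Consequently $J\supseteq(x^2,xy^l,y^{2l+m})$, $J$ is determined degree-by-degree by which monomials of $x$-degree $\le1$ and $y$-degree $\le l-1$ it contains, and (being saturated Borel) its minimal generators lie in $\kk[x,y,z]$; such $J$ are described by a finite combinatorial datum whose ``total size'' is pinned down by the Hilbert polynomial.

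The second step is to use the model family of \ref{ExLigMatrise}: for each homogeneous $F$ of degree $l+m-1$ (which after row and column operations may be put in the normal form \ref{ExLigFform}), the $2\times2$ minors $x^2,\ xy^l,\ G=xF+y^{l+m}$ of $A(F)$ generate an ACM codimension-two ideal $I(F)$ on $H(l,m)$, with $\gin$ for revlex equal to $\Bo(l,m)$. One checks, using that $\Bo(l,m)$ is a smooth point so $H(l,m)$ is irreducible of computable dimension, that the closure of the locus swept out by the $I(F)$ (together with linear changes of coordinates) is all of $H(l,m)$; hence any flat limit of the $I(F)$ lies on $H(l,m)$. The heart of the argument is then to produce, for the given $J$, a monomial (or $y,z,w$-polynomial) specialization $F_0$ and a term order $\tau$ refining $x>y>z>w$ with $\ini_\tau(I(F_0))=J$, exactly as in the case $(l,m)=(1,3)$ where $F_0=z^3$ and $\tau=$ lex. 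Here one expects the element $G_s$ from the proof of Theorem~\ref{NecTheMain}, namely $G_1\equiv x\cdot(\text{leading data of }F_0)$ and $G_2\equiv\det W_2$, to account respectively for the extra generators of $J$ of $x$-degree $1$ and of $x$-degree $0$, with a Buchberger-criterion check — uniform in $(l,m)$ — confirming that the minors, together with suitable $y^jG$ and $S$-polynomial combinations, form a Gr\"obner basis with initial ideal $J$.

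The main obstacle is this third step, which is genuinely open. First one must classify combinatorially all saturated Borel $J$ satisfying (i), (ii) and the Hilbert-polynomial condition and read off the correct $F_0$ from each; this bookkeeping grows rapidly with $l,m$ (there are $45$ such ideals for $(3,3)$). More seriously, it is not clear that the single matrix shape $A(F)$, even modulo all row/column operations and coordinate changes, produces enough degenerate initial ideals to reach every such $J$: one may instead need to enlarge the model family (degenerating $F$ inside larger Hilbert--Burch matrices, or composing several one-parameter Gr\"obner degenerations in the spirit of Hartshorne's connectedness proof and of Lella's work) and to verify flatness at each intermediate stage. Establishing that no obstruction beyond (i) and (ii) can occur is precisely what makes the statement a conjecture; the results of Sections 3--5 carry out this third step for explicit infinite families of $J$, and Section~6 does so for a class valid for all $l$.
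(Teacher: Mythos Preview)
The statement you were asked to prove is labeled a \emph{Conjecture} in the paper and is not proved there; the paper offers only evidence for it (the computed cases $(l,m)=(1,3),(2,2),(3,1),(3,3)$ in Section~2 and the partial sufficiency results of Sections~3--6). There is thus no ``paper's own proof'' to compare your attempt against.

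That said, your write-up is an accurate and well-organized account of the status of the conjecture. You correctly isolate necessity---(i) is the case $a=2$, $d_2=2l+m$ of Theorem~\ref{NecTheMain}, and (ii) is the semi-continuity condition C2---and correctly identify sufficiency as the open direction. Your deduction that (i) and (ii) already force $x^2,xy^l\in J$ is valid and is exactly what underlies the paper's structural description \ref{BorelLigab} of candidate Borel ideals. Your proposed strategy for sufficiency---exhibit each admissible $J$ as a Gr\"obner degeneration of an ACM ideal $I(F)$ coming from the matrix $A(F)$---is precisely the method the paper uses for its partial results, and is in fact stated there as a second, stronger conjecture. Your final paragraph is the honest summary: the paper executes this strategy only for particular infinite families (Theorems~\ref{BorelThep1}--\ref{BorelThePart}) and for individual ideals via the Buchberger-like search of Section~2.4, and a uniform argument remains out of reach.

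In short, your proposal is not a proof and does not pretend to be one; it is a correct outline of what is known, what would need to be done, and why the statement is still conjectural---which is exactly the paper's own position.
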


These two conditions  are independent, as the following example shows.
\begin{example} Let us consider the ACM component of the Hilbert scheme with 
Hilbert polynomial $7t-5$ that contains $J(3,1)=(x^2, xy^3, y^4)$. 
All the Borel ideals on this component are listed in  \ref{ExExSyv}.
 
 In particular,  the ideal $J_{90}=(x^2,x y^3,xy^2 z^2,xyz^3,x z^4,y^7)$ has 
Hilbert polynomial $7t-5$ and does not belong to that component, 
 because it satisfies C1, but does not satisfy C2. In fact  in degree 4 we have 
 $h_{S/J_{90}}(4)=26 > h_{S/J(3,1)}(4)=25$. 
 
 On the other hand, there are many ideals with this Hilbert polynomial that satisfy 
C2, but not C1, for instance the Lex segment $J_1=(x,y^8,y^7 z^9)$ 
 (containing $x$) or
 $J_{98} = (x^2,xy^2,xyz,y^8,y^7 z,y^6 z^2)$ (not containing $x$).  
\end{example}

Now we proceed to consider more examples. As soon as we have eliminated all
Borel ideals not fulfilling C1. and C2., the challenge is to show that the 
remaining ideals are on the component $H(l,m)$. We shall do this in several
examples, illustrating computational arguments and techniques. But first
we recall some basic facts on monomial orders.

%\ecr

\subsection{Monomial orders and segment ideals}

 Given integers $v_1 > v_2 > v_3 > v_4$,
a term order $\prec_{[v_1, v_2, v_3, v_4]}$ 
may be specified by the matrix (see \cite[Sec. 1.4]{KrRo}):
\[M = \left(\begin{array}{cccc} 1&1&1&1 \\ v_1& v_2& v_3 & v_4 
\\ 0&0&-1&0 \\ 0&-1& 0&0\end{array}\right).\]
This means that two monomials $\bfx^{\bfa} > \bfx^{\bfb}$ if $M \cdot \bfa 
> M \cdot \bfb$
in the lex order on four-tuples.

If $J$ is a monomial ideal in $S = \kk[x_0, \ldots, x_n]$, 
its graded piece $J_t$ is a {\it segment}
if there is a term order $\prec$ such that the $d = \dim_\kk J_t$
monomials in $J_t$ are the first $d$ monomials in $S_t$ for
this term order. It is easy to see, \cite[Lem. 3.2]{CLMR}
that if $I_t$ is a segment for $\prec$, then $I_s$ is also 
a segment for $\prec$ when $s < t$. 

Two cases are particularly noteworthy, \cite{CLMR}. One case is when $t$ is the
regularity of $J$ (see \cite[20.5]{Eisenbud} for the definition of regularity). 
For a Borel ideal this is simply the degree
of the largest generator of $J$. In this case $J$ is called a reg-segment
ideal. The other case is when $t$ is the 
Gotzmann number $r$ of $J$. This number
depends only on the Hilbert polynomial $p(t)$
of $J$ and it is the largest  regularity 
an ideal with Hilbert polynomial $p(t)$  can have. 
It is the regularity of the
lex segment ideal with this Hilbert polynomial. In this case $J$ is
called a Hilbert segment ideal.

In the following examples, when we have a Borel ideal $J$
fulfilling the necessary conditions C1. and C2. for it to be on $H(l,m)$, 
we check if it is a reg-segment ideal for some order $\prec_{[v_1, v_2, v_3, v_4]}$.
This can be done by the applet Segment available at \cite{LeBo}.
If it is so, take an ACM ideal $I$ corresponding to a point on 
the component $H(l,m)$. Usually an ideal of the form \ref{ExLigTominor}
below, where $F$ is general, or even a completely 
general ideal on $H(l,m)$ (this will be an ACM ideal by Theorem
\ref{NecProJB}).
It corresponds to a smooth
point on $H(l,m)$ by \cite{El}. 
Hence $H(l,m)$ is the unique component containing
the ideal $I$.  We compute
its initial ideal $\text{in}_\prec(I)$ and see if the saturation of this ideal
is $J$. Then $J$ will correspond to a point on $H(l,m)$.
In all cases when $J$ is reg-segment, this approach has worked for us to
show that $J$ is on the component.

\begin{example} The case $l=2$ and $m=2$. The Hilbert polynomial is $6t-3$. 
By using the applet {\texttt BorelGenerator} \cite{LeBo} 
we may get a list of all Borel
ideals with this Hilbert polynomial, and there are $31$ such.
Only seven of them fulfill the conditions C1. and C2. These are (as numbered
by the applet of loc.cit.):
\begin{itemize}
\item $J_{21}= (x^2, xy, y^6, xz^6)$,    
\item $J_{22}= (x^2, xy^2, xyz, y^6, xz^5)$,      
\item $J_{23}= (x^2, xy^2, xyz^2, xz^4, y^6)$,
\item $J_{27}= (x^2, xy, y^6, y^5z^2) $,
\item $J_{28}= (x^2, xy^2, xyz, y^6, y^5z)$,
\item $J_{29}=( x^2, xy^2, xyz^2, y^5)$,
\item $J_{31}= (x^2, xy^2, y^4)$.                          
\end{itemize}

All of the above ideals are reg-segment ideals for various term orders.
A general curve on this ACM component will be a complete intersection 
of a quadric $Q$ and a cubic $C$. Let $I = (Q,C)$ where the two forms
are chosen generic (i.e. randomly). 
% By Galligos theorem 
%\cite[Theorem 15.20]{Eisenbud}
%we know that the initial
%ideal of $I$ for any term order is Borel fixed. 

By considering the term orders for which the ideals above are segments, 
we find the following initial ideals:
\noindent\begin{itemize} 
\item  $\ini(I,\prec_{[42, 8, 1, 0]})=(x^2, x y^2, x y z^2, xyzw^2 , y^6, 
xyw^4 , x z^6)$ whose saturation is $J_{21}$;
\item  $\ini(I,\prec_{[17, 4, 1, 0]})=(x^2, x y^2, x y z^2, xyzw^2, x z^5, y^6)$ 
whose  saturation is $J_{22}$;
\item   $\ini(I,\prec_{[16, 4, 2, 0]})=J_{23}$;
\item  $\ini(I,\prec_{[50, 12, 1, 0]})=(x^2, x y^2, x y z^2, xyz w^2 , 
xyw^4 , y^6, y^5 z^2)$ whose  saturation is $J_{27}$;
\item    $\ini(I,\prec_{[44, 11, 1, 0]})=(x^2, x y^2, x y z^2, xyzw^2 , y^5z, y^6)$
whose saturation is $J_{28}$;
\item    $\ini(I,\prec_{[ 37, 10, 1,0]})=J_{29}$.
\end{itemize}
\end{example}

Hence all of them are on the component $H(2,2)$.
In the above example  we obtain the same initial ideals also choosing as $I$ 
an ACM ideal on the component $H(l,m)$ 
 which is generated by the $2 \times 2$ minors  
of a matrix of the following type:
\begin{equation} \label{ExLigMatrise} A(F) = 
 \left [ \begin{matrix} x & y^m & -F \\
                          0  & x & y^l
           \end{matrix}   \right ]  
\end{equation}
where $F$ is a polynomial in $\kk[ y, z, w]$, homogeneous
of degree $l+m-1$.
%Such an ideal corresponds to a
%smooth point on the Hilbert scheme \cite{El}, and so is on no other component
%than $H(l,m)$. 
Performing row and column operations
on the matrix, we may  assume that  
\begin{equation} \label{ExLigFform}
F = y^{l-1}F_{m} + y^{l-2} F_{m+1} + \cdots + y^{0}F_{m+l-1}. 
\end{equation}
with $F_i \in \kk[z,w]$.
The $2 \times 2$-minors of the matrix are 
\begin{equation} \label{ExLigTominor} x^2,\quad  x y^l, \quad
 G := xF + y^{m+l}. 
\end{equation}
By multiplying $G$ with $y^l$ we see that the ideal generated by these
minors will also contain $y^{2l+m}$.

Two notable features of the above example are.
\begin{itemize}
\item[1.] Each Borel ideal is the limit of ideals which are generated by the
$2\times 2$-minors of the matrix $A(F)$. In particular
all these ideals contain
$x^2, xy^l$ and $y^{2l+m}$.
\item[2.] The ideal that we degenerate is obtained by a general choice
(either $Q$ and $C$, or $F$).
\end{itemize}

We shall see that in all our examples we are able to do as in 1.
However we are not always able to do as in 2.

\begin{conjecture}
Given a saturated Borel ideal $J$ on the Hilbert scheme component of the
Borel ideal
$\Bo(l,m) = (x^2, xy^l, y^{l+m})$.
Then there is a family of ideals generated by the $2\times 2$-minors of
matrices $A(F)$ of \ref{ExLigMatrise}, 
which specialize to a monomial ideal whose
saturation is $J$. 
\end{conjecture}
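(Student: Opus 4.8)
The plan is to realize any saturated Borel ideal $J$ on $H(l,m)$ as the saturation of a monomial ideal that occurs as a flat limit, under a one-parameter subgroup of the torus, of the ideals $I(F)=(x^2,\,xy^l,\,y^{l+m}+xF)$ given by the $2\times2$-minors of $A(F)$. Write $\mathcal{A}$ for this family as $F$ runs over forms of degree $l+m-1$: each $I(F)$ is ACM of codimension two with the graded Betti numbers of $\Bo(l,m)$, one has $I(0)=\Bo(l,m)$, and $y^{2l+m}\in I(F)$ always (multiply $y^{l+m}+xF$ by $y^l$ and absorb $xy^lF$ into $(xy^l)$); hence the classifying map of $\mathcal{A}$ is flat, lands in $H(l,m)$, and its closure $\overline{\mathcal{A}}$ is an irreducible closed subvariety of $H(l,m)$ passing through $\Bo(l,m)$. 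In this language the statement is equivalent to saying that $\overline{\mathcal{A}}$ contains every Borel fixed point of $H(l,m)$, the monomial ideals $\ini_w I(F)$ being the natural source of such limits.

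First I would replace a term order by an integer weight $w=(w_x,w_y,w_z,w_w)$ with $w_x>w_y>w_z>w_w>0$ together with a fixed tie-break; since $I(F)$ is homogeneous and only the scheme cut out by the initial ideal matters, this loses nothing, exactly as with the orders $\prec_{[v_1,v_2,v_3,v_4]}$ of Section~2. For such $w$ one has $\ini_w(x^2)=x^2$ and $\ini_w(xy^l)=xy^l$, and the only freedom is $\ini_w(y^{l+m}+xF)$, which after a small perturbation of $w$ is a single monomial: either $y^{l+m}$, or $x\mu_0$ with $\mu_0$ the $w$-largest monomial of $F$. One then computes a \Gr\ basis of $I(F)$: it consists of $x^2$, $xy^l$, $y^{2l+m}$, $\ini_w(y^{l+m}+xF)$ and finitely many further monomials obtained by $S$-polynomial reduction, which because of the shape $x\mu_0$ with $\mu_0\in\kk[y,z,w]$ are again either of the form $x\cdot(\text{truncation of }F)$ or pure powers coming from $y^l\cdot(y^{l+m}+xF)$. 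Reading off $\ini_w I(F)$ and passing to its saturation produces a saturated monomial ideal containing $x^2,xy^l,y^{2l+m}$, so condition C1 holds automatically, while C2 becomes a constraint on $w$ and on the monomial support of $F$.

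Next comes the combinatorial heart of the matter. Granting the description of the Borel ideals on $H(l,m)$ predicted by Conjecture~\ref{ExConjObs} --- whose ``only if'' direction is Theorem~\ref{NecTheMain} together with semicontinuity, hence unconditional --- such a $J$ is encoded by its $x$-free part $J_0=\{m\in J:x\nmid m\}$ and its $x$-linear part $J_1=\{\mu\in\kk[y,z,w]:x\mu\in J\}$: a pair of Borel ideals $J_0\subseteq J_1$ in $\kk[y,z,w]$ with $y^{2l+m}\in J_0$ and $y^l\in J_1$, compatible so that $J=(x^2)+xJ_1+J_0$ is Borel, and satisfying C2. For a given $J$ one must engineer the monomial support of $F$ and the ratios $w_x:w_y:w_z:w_w$ so that the truncations of $F$ produced above are exactly the minimal generators of $J_1$, while $y^l\cdot(y^{l+m}+xF)$ and its reductions generate $J_0$ --- an explicit, case-by-case construction of $(F,w)$ out of the combinatorics of $J$. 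This is precisely what the sufficiency theorems of Sections~3--6 carry out for several infinite families of $J$, and it is where the bulk of the work lies.

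The main obstacle is the surjectivity of this construction onto all Borel fixed points of $H(l,m)$. One suggestive structural fact is that $\mathcal{A}$ is stable under the Borel subgroup $B\subset GL(4)$: applying an upper-triangular $\phi$ to $(x^2,xy^l,y^{l+m}+xF)$ returns, after rescaling the generators, an ideal $(x^2,xy^l,y^{l+m}+xF')$ with $F'$ again a form of degree $l+m-1$. Hence $\overline{\mathcal{A}}$ is a $B$-stable closed irreducible subvariety of $H(l,m)$, whose $B$-fixed points are exactly the Borel ideals it contains. But $B$-stability alone guarantees only one $B$-fixed point, namely $\Bo(l,m)$, and $\overline{\mathcal{A}}$ is in general a proper subvariety of $H(l,m)$ --- already for twisted cubics $(l,m)=(1,1)$ it is contained in the locus of schemes lying on a double plane, whereas the general member of $H(1,1)$ is a smooth twisted cubic, which lies on no double plane --- so there is no formal reason it should contain every Borel point. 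A prudent step before attempting the general case would be to extend the computations of Section~2 to, say, $(l,m)=(2,3)$ and $(4,1)$: were some Borel ideal on the component to fail to arise from any $A(F)$, the statement would be false as phrased and would have to be weakened, for instance by allowing more general $2\times3$ presentation matrices or a finite chain of such degenerations.
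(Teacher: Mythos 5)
The statement you are asked about is not a theorem of the paper but one of its conjectures: the authors offer no proof, only computational evidence (the complete lists of Borel ideals on $H(l,m)$ for $(l,m)=(1,3),(2,2),(3,1),(3,3)$, each realized from some $A(F)$) and the partial sufficiency results of Sections 3--6, which show that many infinite families of Borel ideals do arise as saturations of limits of ideals generated by the minors of $A(F)$. Your proposal is, by your own admission, not a proof either. What you describe --- reducing term orders to weight vectors, computing initial ideals of $I(F)=(x^2,xy^l,y^{l+m}+xF)$, noting that $x^2,xy^l,y^{2l+m}$ always survive so C1 is automatic, and then ``engineering'' the support of $F$ and the weights so that the resulting saturated initial ideal is a prescribed $J$ --- is essentially the strategy the paper itself follows (the explicit families $\tilde I$ of Section 4, the weight-vector limits via Bayer's thesis, and the Buchberger-like tuning of the coefficients $C_{i,\alpha}$ in Section 2.4). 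The decisive step, namely that this construction reaches \emph{every} Borel fixed point of $H(l,m)$, is exactly the content of the conjecture, and you leave it open, even suggesting the statement might need weakening. So relative to the paper there is no divergence of method, but there is also no proof: the gap you flag (surjectivity of the construction onto all Borel points of the component) is the conjecture itself.

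Two smaller cautions on the sketch. First, your claim that the Gr\"obner basis of $I(F)$ consists only of $x^2$, $xy^l$, $y^{2l+m}$, truncations of $xF$ and pure powers is too optimistic: the paper's own computations (the syzygetic combinations $G_{01}$, $G_{12}$, $G_{012}^{\pm}$ and their iterates in Sections 4--5) show that the reductions produce genuinely mixed monomials such as $y^{m+4}w^{\Delta+j}$, and controlling these is where the case analysis of Propositions \ref{Limq3ProPDpos}--\ref{Limq3ProP1} lives. Second, the observation that $\overline{\mathcal A}$ is a proper $B$-stable subvariety (all members lie on the double plane $x^2=0$) is correct but not in itself an obstruction, since every Borel ideal on $H(l,m)$ also contains $x^2$; it only shows, as you say, that there is no formal reason for the conjecture to hold, which is consistent with its status in the paper as an open statement supported by the examples of Section 2.
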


\begin{example} \label{ExExSyv}
The case $l = 3$ and $m=1$. This gives the Hilbert polynomial
$p(t) = 7t-5$.
Using the applet {\texttt BorelGenerator} in \cite{LeBo}, 
there are $112$ {saturated} Borel ideals
with this Hilbert polynomial. Of them, $18$ fulfill conditions C1. and C2.
The labels of these $18$ given by {\texttt BorelGenerator} are
\begin{equation} \label{ExLigListe}
 78, 79, 80, 82, 83, 85, 86,  95, 97, 99, 101, 102, 104, 105, 109, 110, 112. 
\end{equation}
All of these, except $J_{83}$, $J_{85}$, $J_{102}$,   are reg-segment ideals. 
%For instance   $J_{79}$ is not a segment in degree $r=16$ 
%(the Gotzmann number 
%of $p(t) = 7t-5$), but it is a segment in degree $m_0 = 9$ which is its regularity. 
We can verify that all these ideals indeed belong to $H(3,1)$, by
computing the initial ideal of a general ACM ideal on $H(3,1)$, or a
general ACM ideal of the type \ref{ExLigTominor}, w.r.t.  
suitable term orderings. 
% (or using the computationally simpler procedure given in Remark \ref{plu} respect).

 Only the following three cases are not segment ideals, as may 
be verified by the simple criterion {\cite[Prop. 3.5]{CLMR}}.
\begin{itemize}
\item $J_{83} = (x^2, xy^3, xy^2z, xyz^2, xz^6, y^7)$,
\item $J_{85} = (x^2, xy^2, xyz^4, xz^5, y^7)$,
\item $J_{102} = (x^2, xy^3, xy^2z, xyz^2, y^6z, y^7)$,
\end{itemize} 
  
For instance in the case of $J_{85}$ one can see that it is not a segment in degree $7$, since
% using the simple criterion {\cite[Prop. 3.5]{CLMR}}. Indeed,
 $ xyz^3w^2 \not \in J_{85}$,
 $xy^2zw^3,xz^5w \in J_{85}$ and  $(xyz^3w^2)^2 = xy^2zw^3 \cdot xz^5w$. Then there cannot exist a term order $\prec $ such that  
 $xyz^3w^2\prec xy^2zw^3$ and $xyz^3w\prec xz^5$. 
 
%  We can see that also the condition of Proposition \ref{supermin} 
%  are not satisfied using in the same way the monomials  $xyz^3, xy^2z, xz^5$.
 
However,  it is possible to verify that these three ideals 
are also on the component.

\medskip
\noindent $\bullet$ Let $I_{83}$ be the ideal 
generated by the $2 \times 2$-minors of $A(F)$ where
\[ F =y^2z+ wzy+2yz^2-w^2z+4z^3. \] 
The initial ideal of $I_{83}$ with respect to the lex order is 
a monomial ideal whose saturation is $J_{83}$. 

\medskip 
\noindent  $\bullet$
Let $I_{85}$ be the ideal generated by $2 \times 2$-minors of $A(F)$ where
\[ F = y^2z+ wzy-2w^2y+yz^2-9w^2z+3z^3+6w^3. \] 
The intial ideal of $I_{85}$ with respect to the lex order is a monomial
ideal whose saturation is $J_{85}$. 

\medskip
\noindent $\bullet$
Finally let $I_{102}$ be the ideal generated by $2 \times 2$-minors 
of $A(F)$ where
\[ F = y^2z+yz^2 + zw^2. \]
The intial ideal of $I_{102}$ with respect to the monomial order
$\prec_{[10,3,2,1]}$  is an ideal 
whose saturation is $J_{102}$.

In conclusion all the ideals in the list \ref{ExLigListe} are on the component
$H(3,1)$.  
\end{example}

\subsection{Ideals specializing to non-segment ideals}
To construct ideals like $I_{83}$, $I_{85}$ and $I_{102}$ in the example
above, we let
\[ F = \sum_{i=0}^{l-1} y^{l-1-i} F_{m+i} \] where
\[ F_{m+i} = \sum C_{i, \alpha_1,\alpha_2} z^{\alpha_1}w^{\alpha_2} \]
is the general form in $z$ and $w$ 
of degree $m+i$, and the $C_{i,\alpha}$ are variables.
Denote by $L$ the list of the
three $2\times 2$-minors of the matrix $A(F)$ and $I$ the ideal
generated by these minors.
Now fix a term order $\prec$, usually the lexicographic order, and
let $J$ be a Borel ideal. We want to assign values to the $C_{i,\alpha}$
such that the initial ideal of $I$ with respect to $\prec$ is an
ideal $\hat{J}$ whose saturation is $J$. We apply a Buchberger-like algorithm
as follows. We compute the $S$-polynomial of elements in $L$ and reduce
to a polynomial $h$. Let $m$ be the leading term in $h$ and $q(c)$ its 
coefficient, a polynomial in the $C$-variables.

\begin{itemize}
\item[1.] If $m$ is in $J$ we add a variable $c_m$ and a relation 
$c_m q(c) - 1=0$. 
\item[2.] If $m$ is not in $J$ we add the relation $q(c) = 0$, where
$q(c)$ is the coefficient of $m$. Find the 
largest term in $h$ after $m$. Let this be the new value of $m$.
Continue with 2. until $m$ is in $J$, and then go to 1.
\item [3.] Let  $L := L \cup \{h\}$, and continue with 1. or 2. after 
computing a new $S$-polynomial.
\end{itemize}

In the end we get a system of equations in the $C$-variables. If we can find
a solution to these we get an ideal $I$ whose initial ideal will have $J$
as its saturation. {If the system has no solutions, we try 
again fixing a new term order,   chosen so that 1. is used more often than 2. }
This procedure may not always succeed but in all cases
we have used it, it does.

This was the procedure that enabled us to produce $I_{83}, I_{85}$ and $I_{102}$
in Example \ref{ExExSyv} {and most of the explicit \Gr\ deformations in next example.}

\begin{example}
We consider the case The  $l=3$ and $m=3$, namely that of the ACM Borel ideal 
$J(3,3)=(x^2, xy^3, y^6)$, whose  Hilbert polynomial is  $9t-12$. By the applet
{\texttt BorelGenerator} we see that   there are $989$ Borel ideals on 
$\hilb^3_{9t-12}$ and that $J(3,3)$ is  is  $J_{989}$.
Among the remaining $988$ ideals, only $45$   fulfil the condition C1,  and all of 
these, save $J_{834}=(x^2, xy^3, xy^2z^4, xyz^5, xz^6, y^9)$,
fulfil the condition C2.

There are $28$ of these $44$ ideals that are reg-segment with respect to some 
term order. All these can be obtained as (the saturation of) the initial ideal of 
an ACM ideal $I$ of 
the type \ref{ExLigMatrise} for any general $F$.

   They are:
$ 768,  , 769,  770,$  $772,  775,  780$,  $788,  801,  817$, $ 875,  877,   880$, $ 887,  898,  913$, $927$,  $928$,   $938$, $939$,  $941$,  $954$,  $955, 
957$, $ 960, 977,  979,  981,$ $  984$.

\medskip

The other $16$ ideals are not reg-segment ideals.
 However,  they are a \Gr  \ degeneration of the ideal $I$ of the type
 \ref{ExLigMatrise} obtained for a suitable choice of the polynomial $F$.
Now we list the 16 Borel ideals and for each of them the polynomial $F$ and the term order we used. The first $10$     can be obtained using the lexicographic term order.

\begin{lista}
\item$J_{773}   =  (x^2, x y^3, x y^2 z, x y z^2, y^9, x z^{12} )$, $F=y^2z^3-w^3z^2+z^5+2wz^3y+w^2zy^2$ 
\item $ J_{776 }  =  (x^2, x y^3, x y^2 z, x y z^3, y^9, x z^{11} )$, $F=y^2 z^3+z^3 w^2+5 z^4 y+25 z^5+2 z^3 w y-6 w^2 z y^2$ 
\item $J_{781 }  =  (x^2, x y^3, x y^2 z, x y z^4, y^9, x z^{10} ) $, $F=y^2 z^3-w z^4-z^5-y z^4-w^2 z y^2$ 
\item  $ J_{783 }  =  (x^2, x y^3, x y^2 z^2, x y z^3, y^9, x z^{10} )$, $F=y^2  z^3+4  y  z^4+11  w  z^4-\frac{1127}{64}  w^3  z^2+3  z^5+7  w  z^3  y$ 
\item  $ J_{787 }  =  (x^2, x y^2, x y z^6, y^9, x z^9 )$, $F=y^2  z^3-2  w  z^3  y+5  w^2  z^2  y+z^5-\frac{85}{9}  w^2  z  y^2+\frac{25}{3}  w^3  y^2+3  w  z^4$ 
\item  $ J_{790 }  =  (x^2, x y^3, x y^2 z^2, x y z^4, y^9, x z^9 )$, $F=y^2 z^3+2 w z^3 y-z^5+2 w z^2 y^2-2 y z^4$ 
\item  $ J_{798 }  =  (x^2, x y^2, y^9, x y z^7, x z^8 )$, $F=9 y^2 z^3-18 w z^3 y+45 w^2 z^2 y+9 z^5-85 w^2 z y^2+75 w^3 y^2-11 y z^4+27 w z^4$ 
\item  $ J_{799 }  =  (x^2, x y^3, x y^2 z, x y z^6, y^9, x z^8 )$, $F=y^2 z^3-w  z^2  y^2+2  w  z^3  y  \sqrt{7}+4  w^2  z  y^2+7  z^5$ 

\item $ J_{804 }  =  (x^2, x y^3, x y^2 z^3, x y z^4, y^9, x z^8 )$, $F=y^2 z^3+8 z^5-3 w z^3 y+2 w z^4-12 y z^4$ 
\item  $ J_{814 }  =  (x^2, x y^3, x y^2 z^2, x y z^6, x z^7, y^9 ) $     , $F=y^2 z^3+6 z^5+6 w z^2 y^2+6 w z^3 y+6 w z^4+6 yz^4$        \end{lista}
For the last six we used a different term order given by a matrix  of the type:
\[M(v) = \left(\begin{array}{cccc} 1&1&1&1 \\ v_1& v_2& v_3 & v_4 
\\ v_5& v_6& v_7 & v_8  \\ 0&0& 1&0\end{array}\right).\]        
\begin{lista}
\item  $ J_{888 }  =  (x^2, x y^3, x y^2 z, x y z^2, y^9, y^8 z^5 )$, $F=y^2  z^3+w  z^4-2  z^5+w^2  z  y^2+w^3  z^2+w^2  z^2  y$, 

\noindent $v=[14, 2, 0, 0], [0, 0, 2, 1]$ 

\item  $ J_{899 }  =  (x^2, x y^3, x y^2 z, x y z^3, y^9, y^8 z^4 )$, $F=y^2  z^3-2  z^5+w^2  z  y^2+w^2  z^2  y+w  z^4$, 

\noindent $v=[14,2,0,0],[0,0,2,1]$

\item  $ J_{914 }  =  (x^2, x y^3, x y^2 z, x y z^4, y^9, y^8 z^3 )$, $F=y^2  z^3-2  z^5+w^2  z  y^2-y  z^4+w  z^4$, 

\noindent $v=[14, 2, 0, 0], [0, 0, 2, 1] $

\item  $ J_{930 }  =  (x^2, x y^3, x y^2 z^2, x y z^4, y^9, y^8 z^2 )$, $F=y^2  z^3-2  z^5+w  z^4-w^2  z^2  y$, 

\noindent $v= [14, 2, 0, 0], [0, 0, 1, -1]$ 

\item  $ J_{944 }  =  (x^2, x y^3, x y^2 z^3, x y z^4, y^9, y^8 z )$, $F=y^2  z^3+y  z^4-2  z^5+w  z^4$, 

\noindent $v= [14, 2, 0, 0], [0, 0, 7, 1]$  

\item  $ J_{978 } =  (x^2, x y^2, y^9, y^8 z, y^7 z^2 )$, $F=y^2  z^3+w^2  z^2  y-w^3  z  y+w  z^2  y^2+w^4  z-3  w^5$, 

\noindent $v=[12,2,0,0],[0,0,7,1] $  
\end{lista}
\end{example}

\section{Borel ideals on components of ACM curves}

We now consider ACM curves in $\ptre$ and investigate Borel ideals on
the Hilbert scheme component of such curves. We give various  sufficient
conditions for a Borel ideal to be on such a component.

The idea is to
construct families of ACM curves and find Borel ideals which are
specializations of such families. This is intricate and we shall focus
here on the case that the curves are on a quadric, that is the case
$a = 2$ in \ref{NecLigJ}. Related to this is \cite{Le} by P.Lella
where he constructs families of ideal parametrized by a rational curve,
connecting two Borel ideals. In this way he can proceed stepwise between
Borel ideals. However as soon as one has used two steps or more, one cannot
be sure that the starting and ending Borel ideal is on the same
component.

\subsection{Conditions on Borel ideals}
Let $J$ be a saturated Borel ideal on the component $H(l,m)$ of 
$J(l,m) = (x^2, xy^l, y^{l+m})$. By Theorem \ref{NecTheMain}
all the monomials $x^2, xy^l$ and $y^{2l+m}$ are in the ideal $J$. Hence
$J$ must have the following set of generators for some $0 \leq p \leq l$. 
\begin{eqnarray} 
\notag  & & x^2, \\
\label{BorelLigab} & &  xy^{l-p}z^{a_0}, xy^{l-p+1}z^{a_1},
\ldots, xy^{l-1}z^{a_{p-1}}, xy^l, \\ 
\notag & & y^{l+m+p}z^{b_p}, \ldots, y^{2l+m-1}z^{b_{l-1}}, y^{2l+m},
\end{eqnarray}
where the sequences $a_0,a_1, \ldots, a_{p-1} $ and $b_p, b_{p+1}, 
\ldots, b_{l-1}$ are
strictly decreasing until they possibly become zero.  
% We could also assume $a_0\neq 0$. Can this simplify something? 

It will also be convenient to allow these sequences to be weakly 
decreasing. We call such an ideal an {\it almost Borel ideal}.

\begin{lemma} The ideal $\Bo(l,m)$ and the ideal $J$ with generators
\ref{BorelLigab} have the same Hilbert polynomial iff
$ \sum a_i + \sum b_i = \sum_{i = 0}^{p-1} (m+2i)$.
\end{lemma}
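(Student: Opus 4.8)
The plan is to compute the Hilbert polynomial of $S/J$ directly by enumerating standard monomials, and then compare with that of $S/\Bo(l,m)$. First I would exploit that \emph{no} generator in \ref{BorelLigab}, and likewise none of $x^2, xy^l, y^{l+m}$, involves the variable $w$. Writing $J' \sus \kk[x,y,z]$ for the ideal with the same monomial generators, a monomial $x^ey^fz^gw^h$ lies in $J$ iff $x^ey^fz^g$ lies in $J'$, so $S/J \cong (\kk[x,y,z]/J')\te_\kk \kk[w]$ as graded $\kk$-vector spaces, giving $h_{S/J}(t)=\sum_{s=0}^{t}h_{\kk[x,y,z]/J'}(s)$ for all $t$, and the same for $\Bo(l,m)$. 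Hence for $t\gg 0$ the Hilbert polynomial $P_J(t)$ of $S/J$ equals the number of standard monomials of $\kk[x,y,z]/J'$ of degree $\leq t$.

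Next I would enumerate these standard monomials $x^ey^fz^g$. Since $x^2\in J$ we have $e\leq 1$. For $e=0$: using that the pure-$y$-power generators are $y^{l+m+j}z^{b_j}$ (for $p\leq j\leq l-1$, together with $y^{2l+m}$) with the $b_j$ weakly decreasing, $y^fz^g$ is standard precisely when $f\leq l+m+p-1$ with $g$ arbitrary, or $f=l+m+j$ for some $p\leq j\leq l-1$ with $g\leq b_j-1$; this contributes $\sum_{f=0}^{l+m+p-1}(t-f+1)+\sum_{j=p}^{l-1}b_j$. For $e=1$: since $xy^l\in J$ one has $f\leq l-1$, and analogously $xy^fz^g$ is standard precisely when $f\leq l-p-1$ with $g$ arbitrary, or $f=l-p+i$ for some $0\leq i\leq p-1$ with $g\leq a_i-1$; this contributes $\sum_{f=0}^{l-p-1}(t-f)+\sum_{i=0}^{p-1}a_i$. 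Summing and using $\sum_{f=0}^{M-1}(t+1-f)=M(t+1)-\binom{M}{2}$,
\[ P_J(t) = (2l+m)\,t + (l+m+p) - \binom{l+m+p}{2} - \binom{l-p}{2} + \sum_{i=0}^{p-1}a_i + \sum_{j=p}^{l-1}b_j. \]
Taking $p=0$ and all $b_i=0$ recovers $\Bo(l,m)$, so $P_{\Bo(l,m)}(t)=(2l+m)\,t+(l+m)-\binom{l+m}{2}-\binom{l}{2}$.

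Finally I would equate the two constant terms. The identity $\binom{N+p}{2}-\binom{N}{2}=pN+\binom{p}{2}$, applied once with $N=l+m$ and once with $N=l-p$, reduces $P_J=P_{\Bo(l,m)}$ to the equation $\sum a_i+\sum b_i = -p+p(l+m)-p(l-p)=p(m+p-1)$; and since $\sum_{i=0}^{p-1}(m+2i)=pm+p(p-1)=p(m+p-1)$, this is exactly the asserted criterion. The only delicate step is the standard‑monomial bookkeeping: one must be careful, on the boundary values of $f$, about which generators $xy^{l-p+i}z^{a_i}$ and $y^{l+m+j}z^{b_j}$ actually divide a given monomial, and that the weakly decreasing sequences make $a_{i}$ (resp. $b_{j}$) the controlling threshold. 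Once this enumeration is pinned down, the remainder is routine binomial arithmetic.
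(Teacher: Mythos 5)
Your proof is correct, and it takes a genuinely different route from the paper's. The paper never writes down a Hilbert polynomial explicitly: it reduces the exponents $a_j$, $b_k$ one unit at a time, observing via a short exact sequence with kernel $(xy^{l-p+j}z^{a_j-1})/J$ (a rank-one piece in the variable $w$) that each such reduction shifts the Hilbert polynomial by exactly $1$; this collapses $J$ to $J_{p,p}=(x^2,xy^{l-p},y^{l+m+p})$ with a total shift of $\sum a_i+\sum b_i$, and then two further exact sequences, whose kernels have linear Hilbert polynomials $d-(l-p)+1$ and $d-(l+m+q)+1$, compare $J_{p,p}$ with $J_{0,0}=\Bo(l,m)$ and produce the shift $\sum_{i=0}^{p-1}(m+2i)$. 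You instead exploit that all generators are $w$-free, tensor out $\kk[w]$, and count standard monomials of the three-variable ideal directly, getting closed formulas for both Hilbert polynomials and comparing constant terms by binomial arithmetic; your threshold analysis (that $a_i$, resp.\ $b_j$, is the controlling exponent because the sequences are weakly decreasing) is exactly the delicate point, and you handle it correctly. Each approach has its merits: the paper's stepwise argument avoids all enumeration bookkeeping and any explicit formula, while yours yields the explicit polynomial $P_J(t)=(2l+m)t+(l+m+p)-\binom{l+m+p}{2}-\binom{l-p}{2}+\sum a_i+\sum b_i$, which can be sanity-checked against the paper's examples (e.g.\ $5t-2$ for $(l,m)=(1,3)$ and $9t-12$ for $(3,3)$) and could be reused elsewhere, e.g.\ when comparing Hilbert functions for condition C2.
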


\begin{proof}
Suppose $a_j > a_{j+1}$. Let $J^\prime$ be the ideal with the generator
$xy^{l-j} z^{a_j}$ replaced by $xy^{l-j}z^{a_j - 1}$. There is then an
exact sequence
\[ 0 \pil (xy^{l-p+j}z^{a_j-1}) \pil S/J \pil S/J^\prime \pil 0 \]
where the first submodule consists of elements $xy^{l-p+j}z^{a_j-1}w^r$
for $r \geq 0$. 
Hence the difference of the Hilbert polynomials of the two latter
quotient rings is just $1$. The same thing happens when we
reduce some $b_k$. 

Now let $J_{p,q}$ be the ideal generated by 
$x^2, xy^{l-p}, y^{l+m+q}$. By successively reducing the $a$'s and $b$'s
we find for the Hilbert polynomials
\begin{equation} \label{BorelLigJI}
HP_{J_{p,p}}(d) =  HP_{J}(d) + \sum a_i + \sum b_i.
\end{equation}
Now there is an exact sequence
\[ 0 \pil (xy^{l-p-1}) \pil S/J_{p,q} \pil S/J_{p+1,q} \pil 0 \]
and the kernel is $xy^{l-p-1}\langle z,w \rangle^{d-(l-p)}$ in degree $d$ and so 
has Hilbert polynomial $d-(l-p) + 1$. 
There is also an exact sequence
\[ 0 \pil (y^{l+m+q}) \pil S/J_{p,q+1} \pil S/J_{p,q} \pil 0. \]
The kernel has Hilbert polynomial $d-(l+m+q) + 1$. 
From this we readily get that the Hilbert polynomial
$HP_{J_{p+1,p+1}}(d)$ is equal to $HP_{J_{p,p}}(d) + (m+2p)$.
Starting with $J_{0,0} = \Bo(l,m)$ we therefore get that 
\[ HP_{J_{p,p}}(d) = HP_{\Bo(l,m)}(d) + \sum_{i = 0}^{p-1} (m+2i).\]
Comparing this with \ref{BorelLigJI}, 
$\Bo(l,m)$ and $J$ have the same Hilbert function iff the numerical
equality holds.
\end{proof}

\subsection{Main theorems of sufficiency}
For Borel ideals with $p \leq 2$ we now give sufficient conditions
for a Borel ideal to be on the component $H(l,m)$. 

\begin{theorem}[$p = 1$] \label{BorelThep1}
When $\bfa = (m-i)$ and $\bfb = 
(i,0,\ldots, 0)$, the Borel ideal \ref{BorelLigab} is on 
the component $H(l,m)$. 
\end{theorem}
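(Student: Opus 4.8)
The plan is to realise $J$ as an initial-ideal degeneration of a member of the determinantal family of Section~2, and to use that such degenerations remain on $H(l,m)$. For every form $F$ of degree $l+m-1$, the ideal $I_F=(x^2,xy^l,xF+y^{l+m})$ of $2\times 2$-minors of the matrix $A(F)$ (see~\ref{ExLigMatrise},~\ref{ExLigTominor}) has radical $(x,y)$, hence grade two, so by Hilbert--Burch it is perfect of codimension two and is resolved by the complex attached to $A(F)$, whose graded shifts do not depend on $F$; therefore all the $I_F$ carry the Hilbert polynomial of $\Bo(l,m)=I_0$. Thus $F\mapsto[I_F]$ is a morphism from an affine space to the Hilbert scheme whose image passes through $[\Bo(l,m)]$, a smooth point \cite{El} lying on the single component $H(l,m)$; so the closure of the image lies in $H(l,m)$. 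Consequently, for any $F$ and any term order $\prec$, the subscheme of $\ptre$ defined by $\ini_\prec(I_F)$ --- being the flat limit of $g_t\cdot I_F$ along a one-parameter subgroup $g_t\subset GL(4)$ --- is a point of $H(l,m)$, and that point is the one represented by $\ini_\prec(I_F)^{\sat}$. Hence it suffices to produce, for our $J$, a form $F$ and a term order $\prec$ with $\ini_\prec(I_F)^{\sat}=J$.

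Unwinding~\ref{BorelLigab} for $p=1$, the minimal generators of $J$ are $x^2,\ xy^l,\ xy^{l-1}z^{m-i},\ y^{l+m+1}z^{i},\ y^{l+m+2}$ (for $1\le i\le m-1$; the cases $i=0$ and $i=m$ collapse one generator and are easier). I would look for $F$ so that a short Gr\"obner basis of $I_F$ contains, among its leading terms, a monomial multiple of each of these: $x^2$ and $xy^l$ lie in $I_F$ already, so one needs elements of $I_F$ with leading terms $xy^{l-1}z^{m-i}w^{\ast}$ and $xy^{l-1}z^{\ast}$ (so that saturation clears the surplus $z$'s and $w$'s), together with elements with leading terms $y^{l+m+1}z^{i}w^{\ast}$ and $y^{l+m+2}z^{\ast}$. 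Concretely $F$ should have ``main term'' $y^{l-1}z^{m-i}w^{i}$ plus correction terms of $y$-degree $<l-1$; the corrections cannot be dropped, since if $y^{l-1}\mid F$ then $y(xF+y^{l+m})\equiv y^{l+m+1}$ modulo $(xy^l)$ would force $y^{l+m+1}\in I_F$, hence $y^{l+m+1}\in\ini_\prec(I_F)^{\sat}$, which fails when $i\ge1$. For $\prec$ I would use a matrix term order of the shape $\prec_{[v_1,v_2,v_3,v_4]}$ of Section~2 with the weights balanced so that the $x$-bearing monomials sit just above the competing pure powers of $y$ in the relevant degrees; should no plain order succeed, one fits the coefficients of $F$ to a fixed $\prec$ via the Buchberger-type procedure of \S2.4.

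The verification then has two parts. First, a direct Gr\"obner computation for $I_F=(x^2,xy^l,xF+y^{l+m})$: since $F\in\kk[y,z,w]$, only the $S$-polynomials of $xF+y^{l+m}$ with $x^2$ and with $xy^l$, their iterated reductions $h_k:=y^{k}(xF+y^{l+m})-c_k\,xy^l$ (with $c_k\in\kk[y,z,w]$, $k=1,\dots,l-1$), and the $z$-multiples of these, need be examined; reading off the leading terms one checks $J\subseteq\ini_\prec(I_F)^{\sat}$. Second, $\ini_\prec(I_F)$ has the same Hilbert polynomial as $I_F$, hence as $\Bo(l,m)$, hence --- by the Hilbert-polynomial Lemma above, using $\sum a_i+\sum b_i=(m-i)+i=m$ --- as $J$; so the surjection $S/J\twoheadrightarrow S/\ini_\prec(I_F)^{\sat}$ of graded algebras with equal Hilbert polynomials has finite-length, hence zero, kernel ($J$ being saturated), whence $\ini_\prec(I_F)^{\sat}=J$, exactly as in the saturation argument of Section~2.

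The main obstacle is the construction in the middle paragraph: finding one $F$ (and one weight vector) that produces the mixed leading terms $xy^{l-1}z^{m-i}w^{i}$ and $y^{l+m+1}z^{i}$ while keeping the ``too small'' monomials $y^{l+m}$ and $y^{l+m+1}$ out of $\ini_\prec(I_F)$; these requirements pull against one another, because a term order is linear and so the relative order of two monomials cannot be reversed by multiplying both by a common power of $z$. Balancing the correction terms of $F$ against the weights $v_j$ so as to thread this needle is the computational heart of the proof. (When $J_{m_0}$ happens to be a segment --- as it appears to be for many of these $(l,m,i)$ --- this can be bypassed: a generic $F$ has nonzero Pl\"ucker coordinate with respect to $J_{m_0}$ and the Saturation Lemma applies at once, so one may settle those cases that way and keep the explicit construction for the rest.)
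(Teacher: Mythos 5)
Your framework is sound and matches the paper's: the ideals $I_F$ of minors of $A(F)$ are Hilbert--Burch perfect of codimension two with $F$-independent Betti numbers, so they (and any flat monomial degeneration of them) stay on $H(l,m)$, and once one knows the saturated limit \emph{contains} the target ideal $J=(x^2,xy^{l-1}z^{m-i},xy^l,y^{l+m+1}z^i,y^{l+m+2})$, the Hilbert-polynomial count $(m-i)+i=m$ together with the paper's Saturation Lemma \ref{BorelLemIJK} forces equality. But the heart of the theorem --- actually producing, for every $l,m,i$, a degeneration whose saturated limit contains $J$ --- is exactly what your middle paragraph leaves open, and you say so yourself ("the computational heart of the proof"). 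The tension you identify is real: with a fixed constant-coefficient $F$ and a single term order one needs simultaneously $xy^{l-1}z^{m+1}\succ y^{l+m+1}$ (to keep $y^{l+m+1}$ out) and leading terms of the form $y^{l+m+1}z^{c}w^{d}$ with $c\le i$ \emph{and} $y^{l+m+1}z^{E}$ (pure in $z$) to force $y^{l+m+1}z^i$ into the saturation; you give no construction resolving this for general $(l,m,i)$, and the segment-ideal fallback is also unproven here (no argument that these $p=1$ ideals are segments, nor that the relevant Pl\"ucker coordinate of a generic $I_F$ is nonzero --- the paper only checks such facts computationally in examples).

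The paper closes precisely this gap by a small but essential extra idea that your sketch lacks: the $z$-transform. It takes the two-term form $F=t_0y^{l-1}z^{m-i}w^i+t_1y^{l-2}z^{m+1}$ inside the family $A_R(F)$ (with the extra parameter $s$ on $y^{l+m}$), chooses weights with $t_0>t_1>s$ and $st_0>t_1^2$, and reads off from $G_0$, $G_1$ and $G_{01}=t_1z^{i+1}G_0-t_0w^iG_1$ that the monomial limit contains $xy^{l-1}z^{m-i}w^i$, $xy^{l-1}z^{m+1}$ and $y^{l+m+1}w^i$ (Proposition \ref{EqProq2}, case 2). Note that this limit's saturation is \emph{not} $J$: it contains $y^{l+m+1}w^i$ rather than $y^{l+m+1}z^i$, which is exactly the needle you were trying to thread with a single term order. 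The paper then applies the coordinate change $w\mapsto w+\lambda z$ followed by an initial ideal (the $z$-transform, which stays on the component), converting $y^{l+m+1}w^i$ into $y^{l+m+1}z^i$ while the other monomials land inside $J$, and concludes with the Saturation Lemma. Without this two-stage degeneration (or an explicit $F$ and order proved to work for all $l,m,i$), your proposal does not yet prove Theorem \ref{BorelThep1}.
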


\begin{theorem} [$p=2, \bfb = \bfo$] \label{BorelThep2spes}
When $\bfa = (m+2+i,m-i)$ and
$\bfb = \bfo$, the Borel ideal \ref{BorelLigab} is on 
the component $H(l,m)$. 
\end{theorem}

More generally we can show:

\begin{theorem}[$p = 2$] \label{BorelThep2} When $\bfa = (a_0, a_1)$ and 
$\bfb = (b_2, 0, \ldots, 0)$ with $a_0 + a_1 + b_2 = 2m+2$, 
the Borel ideal \ref{BorelLigab}
is on the component $H(l,m)$ if either: 
\begin{itemize}
\item[1.] $ a_0 \geq m+2$, or
\item[2.] $ a_0 \leq m+2$ and $a_0 - a_1$ is even.
\end{itemize}
\end{theorem}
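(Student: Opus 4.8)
The plan is to realize the Borel ideal of \ref{BorelLigab} with $p=2$ as the saturation of an initial ideal of a suitable ACM curve, which by construction lies on $H(l,m)$. Concretely I would take $I=I(F)$ to be the ideal generated by the $2\times 2$-minors $x^2,\ xy^l,\ G=xF+y^{l+m}$ of the matrix $A(F)$ of \ref{ExLigMatrise}, where $F=y^{l-1}F_m+\cdots+F_{m+l-1}$ is chosen with $F_{m+i}$ a polynomial in $z,w$. Since every such $I$ lies on the component $H(l,m)$ (it is the ideal of a specialization of the family resolved as $\Bo(l,m)$, cf. Proposition \ref{NecProJB}), it suffices to exhibit a single $F$ and a single term order $\prec$ such that the saturation of $\ini_\prec(I)$ equals the target Borel ideal $J$ with $\bfa=(a_0,a_1)$, $\bfb=(b_2,0,\dots,0)$. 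By the Saturation/marked-scheme argument (the Lemma in Section 2), it is in fact enough to match $\ini_\prec(I)$ in a single degree $t\ge m_0$ where $J_t$ is a segment, or more directly to run the Buchberger-type procedure of Subsection 2.5.

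In case 1, $a_0\ge m+2$, I expect $F=z^{a_1}w^{?}\,y^{l-2}+z^{?}\,y^{l-1}+\cdots$ can be taken close to a monomial-like form: roughly $F=y^{l-1}z^{a_0-1}+y^{l-2}z^{a_1-1}(\text{correction})$, engineered so that $G=xF+y^{l+m}$ has leading term $xy^{l-1}z^{a_0-1}$... — no, more carefully, one wants the Gröbner basis of $(x^2,xy^l,G)$ to produce the generators $xy^{l-1}z^{a_0-1}$ wait, the indices in \ref{BorelLigab} are $xy^{l-2}z^{a_0},xy^{l-1}z^{a_1}$ for $p=2$. The point is that when $a_0$ is large the "$x$-part" of $J$ is a long staircase in $z$ that can be produced by iteratively taking $S$-polynomials of $G$ with $x^2$ and $xy^l$ and multiplying by powers of $z,w$; the lex order with $x>y>z>w$ should do it, and the needed relations on the coefficients of $F$ are triangular and hence solvable. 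I would carry this out by induction on $a_0-(m+2)$, peeling off one generator of the staircase at a time, exactly as in the worked cases $J_{83},J_{85}$ of Example \ref{ExExSyv}.

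In case 2, $a_0\le m+2$ with $a_0-a_1$ even, the curve is "less degenerate" and a general $F$ will not work; here the parity condition is the signal that one must impose a square relation. I would look for $F$ of the shape where $F_m$ (the top coefficient) is a pure power of $z$ times something, and the lower $F_{m+i}$ are chosen so that after reducing $G$ against $x^2$ and $xy^l$ one is left needing $xy^{l-1}z^{a_1}$ and $xy^{l-2}z^{a_0}$ in the initial ideal while keeping $xyz^{(a_0+a_1)/2}$ wait — indices: keeping $xy^{l-2}z^{(a_0+a_1)/2}$ out of the ideal is impossible when $(a_0+a_1)/2$ lies between, so instead the even condition guarantees $\bigl(xy^{l-2}z^{(a_0+a_1)/2}\bigr)^{2}=xy^{l-2}z^{a_0}\cdot xy^{l-1}z^{a_1}\cdot x^{-1}y\cdots$ factors correctly and the non-segment obstruction is absent; one then chooses $F$ so the single "square" $S$-polynomial vanishes, which is solvable precisely when $a_0-a_1\in 2\ZZ$. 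Equivalently I would write $F=y^{l-1}(z^{a_1}+\cdots)+y^{l-2}(z^{a_0-1}+\cdots)$ and track the one coefficient that must be a perfect square (compare $J_{85}$, where a $\sqrt 7$ appears, and $J_{814}$).

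The main obstacle will be case 2: producing the explicit $F$ and verifying that the Buchberger-type procedure of Subsection 2.5 terminates with the correct saturated Borel ideal, i.e. that the system of polynomial equations in the coefficients $C_{i,\alpha}$ has a solution, with the parity hypothesis being exactly what makes one crucial equation — a difference of two monomials set equal, forcing an extracted square root — solvable over $\kk$. I would isolate this as a lemma: \emph{for $a_0\le m+2$ the relevant $S$-polynomial reduction leaves a single obstruction of the form $c^2=(\text{nonzero scalar})$ with $c$ a coefficient of $F$, provided $a_0-a_1$ is even, and is unobstructed otherwise}; once that lemma is in place the conclusion follows from the Saturation Lemma / marked-scheme identification exactly as in part a of the Lemma in Section 2. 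The remaining steps — checking $y^{2l+m}\in\ini_\prec(I)$ and that the Hilbert polynomials match (the numerical criterion $a_0+a_1+b_2=2m+2$ is assumed) — are routine.
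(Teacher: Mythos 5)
Your proposal is a plan rather than a proof: in both cases the essential content is deferred. For case 1 you assert that the relations on the coefficients of $F$ are ``triangular and hence solvable'' and that an induction ``peeling off one generator at a time'' works, but neither the explicit $F$, nor the term order, nor the termination of the Buchberger-type procedure of Subsection 2.5 is established; that procedure is presented in the paper as a heuristic that ``may not always succeed,'' so invoking it does not by itself prove anything. More seriously, your proposed mechanism for case 2 does not hold up. Solvability of a single equation $c^{2}=\lambda$ with $\lambda$ a nonzero scalar has nothing to do with the parity of $a_0-a_1$ in characteristic zero (indeed the paper freely uses $\sqrt{7}$ in Example \ref{ExExSyv}ff.), so the claimed lemma ``unobstructed iff $a_0-a_1$ even'' is unsupported and, as an explanation of where the hypothesis enters, incorrect. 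In the paper the parity is structural, not arithmetic: one works with the parametrized family $\tI$ of Section 4 ($F=\sum t_k y^{l-1-k}z^{\ast}w^{\ast}$, extra parameter $s$), eliminates $xy^{l-1},xy^{l-2}$ from $G_0,G_1,G_2$ to form $G_{012}^{-}$, whose surviving term $st_1^{2}y^{l+m+2}w^{2j}$ carries the even exponent $2j$ because it is the square of a matrix entry; choosing the parameter order $t_1t_2^{2}>st_1^{2}>t_2^{3}$, $t_1^{2}>t_0t_2$ and passing to a one-parameter limit, the $z$-transform ($w\mapsto w+\lambda z$) of the limit ideal saturates, by the Saturation Lemma \ref{BorelLemIJK}, to the Borel ideal with $\bfa=(m+2+i-2j,\,m-i)$, $\bfb=(2j)$ (Proposition \ref{Limq3ProP2B}); hence $a_0-a_1=2+2(i-j)$ is forced to be even, which is exactly the hypothesis of part 2. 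Part 1 is Proposition \ref{Limq3ProP2A} by the same method with a different parameter order.

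Two further gaps in your route: you never explain how a fixed $F$ and a plain initial-ideal computation would produce the $\bfb$-part generator $y^{l+m+2}z^{b_2}$ with $b_2>0$ (the paper needs both the $z$-transform and limits in which $s$ is \emph{not} the smallest parameter to reach ideals with $p<3$ and nonzero $\bfb$; compare the special matrix orders needed for $J_{888}$--$J_{978}$), and your appeal to the segment/Pl\"ucker lemma of Section 2 is not available in general here, since the target ideals need not be segment ideals (that is precisely why the paper develops the limit machinery of Sections 4--5 instead). So the statement is true, but your argument as it stands does not prove it.
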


The proofs of these theorems is developed in the next section.
%will be at the end of Section \ref{Limitq3Sek}.

We also have a more general result.
Given non-negative integers with 
\[ p_0 \leq p_1 +1 \leq p_2 + 2 \leq \cdots \leq p_{l-1} + l-1, \]
and assume $p_0$ is the minimum of the $p_i$'s.
Consider partitions
\[ \lambda : \la_1 \geq \la_2 \geq \cdots \geq \la_r (\geq 0) \] 
consisting of $r$ parts of sizes $\leq l-1$. Let $p_\la
= \sum p_{\la_i}$.
% and $a_\la = \sum a_{\la_i}$.
%Also let $\overline{\la}$ be the complementary partition given by 
%$\overline{\la}_i = l-1-\la_{r+1-i}$ for $i = 1, \ldots, r$. 

\begin{theorem} \label{BorelThePart} With notation and assumptions as
above assume also for each $r = 0, \ldots, l-1$ that 
$r p_{l-r} \geq p_\la$ for all partitions $\la$ of $r(l-r)$ 
into $r$ parts of sizes $\leq l-1$. (In other words $p_\la$ achieves
its maximum when all parts are equal.) 
%\item [2.] $(r+1) a_r < a_\lambda$ for all partitions $\la$ of
%$r(r+1)$ into $r+1$ parts of sizes $\leq l-1$. 
%\end{itemize}
%Then the limit ideal has saturation the Borel ideal with  $p = l-1$
%and 
Then the Borel ideal \ref{BorelLigab} with $p = l$ and 
\[ a_i = m+2(l-1-i) + (l-1-i) p_{i+1} - (l-i)p_i + p_0 \]
for $i = 0, \ldots, l-1$, is on the component $H(l,m)$.
\end{theorem}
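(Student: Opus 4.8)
The plan is to realize the Borel ideal of \ref{BorelLigab} with $p=l$ as a flat limit of ideals generated by the $2\times 2$-minors of a matrix of type \ref{ExLigMatrise}, following the strategy used for Theorems \ref{BorelThep1}--\ref{BorelThep3}. First I would set up the deformation: take
\[ F = \sum_{i=0}^{l-1} y^{\,l-1-i}\,\bigl( c_i\, z^{m+i} + (\text{lower }z\text{-order terms in }z,w) \bigr), \]
so that $G = xF + y^{l+m}$ together with $x^2, xy^l$ generates an ACM ideal $I$ on $H(l,m)$, and pick a weight/term order $\prec$ (a matrix of the shape displayed before Example \ref{ExExSyv}, with the $z$-row weights to be chosen). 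The combinatorial heart is to identify, for a generic choice of the $c_i$, the initial terms of the Gr\"obner basis elements obtained by reducing $S$-polynomials of $x^2, xy^l, G$ and $y^{2l+m}$; these reductions produce, for each $j=0,\dots,l-1$, a polynomial whose leading term should be forced to be $xy^{\,l-j}z^{a_j}$ for the prescribed exponents $a_i = m+2(l-1-i)+(l-1-i)p_{i+1}-(l-i)p_i+p_0$. The translation between the weights $p_i$ and the exponents $a_i$ is exactly the telescoping identity appearing in the statement, and I would verify it by an induction on $j$ that tracks how multiplying $G$ by successive powers of $y$ and reducing against lower-degree relations raises the $z$-exponent.

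Next, the convexity hypothesis $r\,p_{l-r}\geq p_\lambda$ for every partition $\lambda$ of $r(l-r)$ into $r$ parts $\leq l-1$ is precisely what guarantees that the term order can be chosen so that the ``good'' monomial $xy^{\,l-j}z^{a_j}$ actually wins over every competing monomial appearing in the reductions. Concretely, a competing monomial of the same total degree corresponds to a partition $\lambda$, its $\prec$-weight is governed by $p_\lambda$, and the inequality $r\,p_{l-r}\ge p_\lambda$ says the monomial built from equal parts (which is the one we want) has the largest weight. So I would: (i) list the monomials of the relevant degrees that can occur in the reduced Gr\"obner basis elements; (ii) attach to each the partition recording its $y$-versus-$z$ distribution; (iii) use the hypothesis to conclude the desired monomial is $\prec$-maximal; and (iv) check that the coefficient of that monomial, a polynomial in the $c_i$, is not identically zero, so that a generic specialization keeps it nonvanishing. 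Then the initial ideal of $I$ has the monomials $x^2, xy^{l-p}z^{a_0},\dots, xy^l, y^{2l+m}$ among its generators, and since $I$ and this Borel ideal have the same Hilbert polynomial (by the Hilbert-polynomial Lemma preceding Theorem \ref{BorelThep1}, using $\sum a_i = \sum_{i=0}^{l-1}(m+2i)$, which follows from the telescoping sum of the $a_i$), the saturation of $\ini(I)$ is exactly $J$. Flatness of the Gr\"obner degeneration then places $J$ on the component $H(l,m)$.

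I expect the main obstacle to be step (iv) combined with the bookkeeping in (i)--(iii): showing that, after all the reductions, the coefficient of the target monomial $xy^{\,l-j}z^{a_j}$ is a nonzero polynomial in the $c_i$ for \emph{every} $j$ simultaneously, i.e.\ that one generic choice of $F$ works for all $l$ reduction steps at once. This is where the earlier low-$p$ cases needed the parity/divisibility conditions (the ``odd prime factor'' hypothesis in Theorem \ref{BorelThep3}); in the present generality the convexity hypothesis on the $p_i$ is the clean replacement, but one still has to rule out an accidental cancellation. A secondary technical point is verifying that the chosen weight matrix is a legitimate term order (it must refine to a total order on monomials of each degree, so the last rows of the matrix must break all remaining ties); this is routine but must be stated. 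Once the monomial $xy^{\,l-j}z^{a_j}$ is confirmed to appear with nonzero generic coefficient and to be $\prec$-leading, the rest --- identifying the saturation and invoking the flatness of Gr\"obner degenerations together with smoothness of $\Bo(l,m)$ on $H(l,m)$ --- is formal and parallels the proofs of the $p=1,2,3$ theorems.
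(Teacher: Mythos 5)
Your proposal follows the paper's general framework (degenerate ideals of $2\times 2$ minors of a matrix $A(F)$, then identify the saturation via the Hilbert polynomial), but at the decisive points it is a plan rather than a proof, and the specific mechanism you propose is not the one that makes the theorem work. In the paper the integers $p_i$ are \emph{not} weights of a term order on $k[x,y,z,w]$: they enter as the exponents of $w$ in $F$, namely $F=\sum_i t_i\,y^{l-1-i}z^{m+i-p_{l-1-i}}w^{p_{l-1-i}}$ with independent parameters $t_0>t_1>\cdots>t_{l-1}>s$ in the base ring $R$, and the degeneration is a one-parameter limit obtained by substituting $t_i\mapsto t^{w_i}$, $s\mapsto t^{w}$ for weights realizing a monomial order on $R$ --- not the Gr\"obner degeneration of a single generic member with respect to a term order on $S$. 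The proof then eliminates $xy^{l-1},\dots,xy^{l-r}$ from $G_0,\dots,G_r$ by forming $G_{01\ldots r}=\sum_i(-1)^iM_iG_i$, where the $M_i$ are maximal minors of the coefficient matrix; the hypothesis $r\,p_{l-r}\ge p_\lambda$ is used precisely to identify the minimal $z$- and $w$-powers occurring in these minors (so a common monomial factor can be divided out), while a separate condition on the parameter order ($t_r^{r+1}>t_\lambda$, with $s$ smallest) determines which term survives in the limit. The surviving monomials still carry $w^{p_0}$, and only the $z$-transform ($w\mapsto w+\lambda z$) converts them into the pure $z$-monomials $xy^{l-1-r}z^{a_{l-1-r}}$, after which the Saturation Lemma \ref{BorelLemIJK} applies via the telescoping identity $\sum a_i=\sum_{i=0}^{l-1}(m+2i)$ (the one computation you do carry out correctly).

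The genuine gap is twofold. First, you never exhibit any mechanism that produces the specific exponents $a_i=m+2(l-1-i)+(l-1-i)p_{i+1}-(l-i)p_i+p_0$; the claim that an induction on reduction steps will yield them, with the $p_i$ entering through the weight matrix, is unsubstantiated, and it is exactly where the content of the theorem lies. Second, your step (iv) --- that one generic choice of coefficients $c_i$ together with one term order makes all $l$ target monomials simultaneously leading with nonzero coefficient --- is flagged by you as the main obstacle and left unresolved; it is not a removable technicality. Indeed the paper's own computations show that a general $F$ can fail to degenerate to non-segment Borel ideals (see the remark following Example \ref{ExExSyv}, where special coefficient relations had to be engineered), so a ``generic member plus clever term order'' strategy cannot simply be asserted. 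The published proof sidesteps this entirely by degenerating the coefficients $t_i,s$ themselves at independent rates governed by a monomial order on the parameter ring, which is exactly the ingredient missing from your sketch.
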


In particular letting each $p_i = 0$ we see that the ideal
with $a_i = m + 2(l-1-i)$ for $i = 0, \ldots, l-1$ 
is on the component $H(l,m)$.

\subsection{Auxiliary results}
In the end we now give some auxiliary results which will be repeatedly
used in our arguments for the above results.
%In \ref{BorelLigab} it will be
%convenient to also consider such ideals where the $a$'s and $b$'s
%are weakly decreasing. We call this a saturated {\it almost Borel 
%ideal}. 
When $I$ is a monomial ideal in $k[x,y,z,w]$ we may make a coordinate
change $w \pil w + \lambda z$ and let $J$ be the initial ideal
for any monomial order where $z > w$.
Note that if {$x^ay^bz^{c_1}w^{c_2}$ is in $I$, then 
$x^a y^b z^{c_1 + c_2}$ is in $J$. The following is clear.

\begin{lemma} Any component of the Hilbert scheme containing $I$
will also contain $J$. 
\end{lemma}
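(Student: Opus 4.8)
The plan is to prove the final Lemma — that any component of the Hilbert scheme containing the monomial ideal $I$ in $k[x,y,z,w]$ also contains its initial ideal $J$ under the coordinate change $w \mapsto w + \lambda z$ — by exhibiting $J$ as a flat limit of ideals projectively equivalent to $I$. First I would consider the one-parameter family of linear automorphisms $g_t$ of $\mathbb{P}^3$ given by $g_t: z \mapsto z$, $w \mapsto w + t z$ (keeping $x, y$ fixed), for $t$ in the affine line $\Af$. Applying $g_t$ to $I$ gives a family of ideals $I_t = g_t(I)$, all of which define subschemes projectively equivalent to $V(I)$, hence all lying on the same component(s) of the Hilbert scheme as $I$. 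The content of the lemma is that the flat limit $\lim_{t \to \infty}$ (after a suitable rescaling) of this family is exactly $J$, and since the Hilbert scheme is proper, this limit point lies in the closure of the set $\{[I_t]\}$, which is contained in any component containing $I$.

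The key technical step is to identify this limit with the initial ideal $J = \ini_{\prec}(I)$ for a monomial order with $z > w$. Here I would invoke the standard relationship between Gröbner degenerations and one-parameter subgroups: conjugating by $g_t$ and then taking $t \to \infty$ realizes the weight degeneration associated to the weight vector that assigns weight $1$ to $z$ and $0$ to $w$ (and $0$ to $x,y$). Concretely, for a monomial $x^a y^b z^{c_1} w^{c_2} \in I$, applying $g_t$ and expanding $(w + tz)^{c_2}$ produces a polynomial whose top-degree term in $t$ is (up to scalar) $x^a y^b z^{c_1 + c_2}$; after the standard rescaling in the flat family, the limit ideal is generated by these leading terms, which is precisely the initial ideal with respect to the partial term order refined by any monomial order with $z > w$. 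Since $I$ is already a monomial ideal, no cancellations among distinct generators occur, so the computation is clean: $J$ is generated by the monomials $x^a y^b z^{c_1 + c_2}$ as $x^a y^b z^{c_1} w^{c_2}$ ranges over the generators of $I$ (together with the resulting minimal reduction).

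I would then conclude: the family $\{[g_t(I)] : t \in \Af\}$ lies in the Hilbert scheme $H$, which is projective; its closure $\overline{\{[g_t(I)]\}}$ is irreducible (being the closure of the image of $\Af$) and contains both $[I]$ (at $t = 0$) and $[J]$ (as the limit at $t = \infty$). Any irreducible component of $H$ containing $[I]$ is closed, so it contains this entire irreducible closure, in particular $[J]$. This gives the claim.

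The main obstacle — really the only subtle point — is making precise the claim that the $t \to \infty$ limit of the conjugated family is the initial ideal, i.e. that the family $\{g_t(I)\}$ extends to a flat family over $\mathbb{P}^1$ whose special fibre is $J$. This is the content of the classical Gröbner-degeneration-via-one-parameter-subgroups argument (cf. Eisenbud \cite[Ch. 15]{Eisenbud}), and the fact that $I$ is already monomial makes the bookkeeping transparent, so I expect to dispatch it in a couple of lines by citing the standard fact; the honest alternative is simply to note that flatness over $\Af \setminus \{0\}$ is automatic (all fibres isomorphic) and flatness at $\infty$ follows since all fibres have the same Hilbert polynomial, whence the special fibre is the flat limit and a direct monomial computation identifies it with $J$.
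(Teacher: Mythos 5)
The paper offers no argument here (the lemma is stated as ``clear''), and your degeneration argument is exactly the intended standard one: the ideals $g_t(I)$, $t\in\mathbb{A}^1$, all lie in the $PGL(4)$-orbit of $[I]$; every irreducible component of the Hilbert scheme is invariant under the connected group $PGL(4)$, hence contains each $[g_t(I)]$; and being closed it contains the flat limit as $t\to\infty$. One small logical point: your sentence ``any component containing $[I]$ is closed, so it contains this entire irreducible closure'' is not by itself a valid inference (a closed set containing one point of an irreducible set need not contain the set); what makes the argument work is precisely the $PGL$-invariance of components, which you did assert earlier, so the structure is sound.

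However, one concrete claim in your key identification step is false: it is not true that $J$ is generated by the monomials $x^ay^bz^{c_1+c_2}$ coming from the generators of $I$, nor that ``no cancellations occur''. Take $I=(z^2,w^2)$ and $w\mapsto w+z$: then $I'=(z^2,\,z^2+2zw+w^2)$, and for any term order with $z\succ w$ one finds $\ini(I')=(z^2,\,zw,\,w^3)$ (the $S$-polynomial produces $zw$ and then $w^3$), whereas the transforms of the generators give only $(z^2)$; this is why the paper states only the containment $x^ay^bz^{c_1+c_2}\in J$ for monomials of $I$. The correct way to identify the limit of your family is to note $g_t=\delta_t\circ g_1\circ\delta_t^{-1}$ with $\delta_t(z)=tz$, so, $I$ being monomial, $g_t(I)=\delta_t\bigl(g_1(I)\bigr)$ and the flat limit at $t=\infty$ is the initial ideal of $I'=g_1(I)$ for the weight assigning $z$ weight $1$. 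This weight-initial ideal coincides with $\ini_\prec(I')$ for every monomial order with $z\succ w$, because the substitution mixes only $z$ and $w$: $I'$ is homogeneous for the grading by $(\deg_x,\deg_y,\deg_z+\deg_w)$, and for a polynomial homogeneous in $z,w$ the $\prec$-leading term is its top $z$-degree term whenever $z\succ w$. With this replacement (or, alternatively, by performing the coordinate change first and then invoking the weight-vector Gr\"obner degeneration of \cite[Ch.~15]{Eisenbud} for the order $\prec$ itself, using again the $PGL$-invariance of components), your proof is complete.
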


We call $J$ the {\it $z$-transform} of $I$. The following will
be used frequently.

\begin{satlemma} \label{BorelLemIJK}
Let $I$ be a monomial ideal with the same Hilbert polynomial as $\Bo(l,m)$.
If the saturation of $I$ (resp. of the $z$-transform of $I$) contains 
an {almost Borel ideal} $K$ of the form \ref{BorelLigab} with  
\[ \sum a_i + \sum b_i = \sum_{i = 0}^{p-1} (m+2i), \] 
then $K$ is the saturation of $I$ (resp. the $z$-transform of $I$).
\end{satlemma}

\begin{proof}
Clearly $I$ and $K$ have the same Hilbert polynomial. So $K/I$ is of finite
length. Since $K$ is saturated, it must be the saturation of $I$.
The argument for the $z$-transform is similar.
\end{proof}

\section{Equations of families of ACM curves on a quadric}

We will now describe explicitly the families of ACM curves that we
shall work with and whose degenerations will be Borel ideals.

\subsection{The family of ideals}
Denote by $R$ a polynomial ring $\kk[s,\{t_k\}]$. 
The associated affine space will be a parameter space for the family of ideals $\tI$
we shall work with.
This is the family generated 
by the $2 \times 2$ minors of the matrix 
\[ A_R(F) = \left [ \begin{matrix} x & sy^m & -F \\
                          0  & x & y^l
           \end{matrix}   \right ]  \]
where $F$ is a polynomial in $R[x, y, z, w]$, homogeneous
of degree $l+m-1$ in the $x,y,z,w$. 
The $2 \times 2$ minors of the matrix are 
\[ x^2,\quad  x y^l, \quad
 G := xF + sy^{m+l}. \]

Performing row and column operations
on the matrix, we may  assume 
\[ F = y^{l-1}F_{l-1} + y^{l-2} F_{l-2} + \cdots + y^{l-q}F_{l-q}. \]
Of course the most general is having $q = l$. We write it however
in this way since $q$ will be a natural parameter in the families we
construct. By considering $y^q G$ note that $sy^{m+l+q}$ is in 
$\tI$.

%\noindent{\bf Notation.}
%If $P$ and $Q$ are polynomials in $k[x,y,z,w,t, t^{-1}]$ and 
%$I_t$ and ideal in this
%ring, we write
%\[ P \equiv Q \pmod {I,t^*} \]
%if \[ P \equiv Q \pmod {I,t^N} \]
%for some integer $N$ that may be chosen arbitrarily large.
%The ideal $I_t$ has a limit $I_0$ as $t \pil 0$ and similarly $P$ and
%$Q$ have limits $P_0$ and $Q_0$. We will then have
%\[ P_0 \pmod  

We now assume that each $F_{l-i}$ is the product of a monomial
in $z,w$ and a variable in the ring $R$. More specifically 
we assume that $F$ has the following form
\[ t_0y^{l-1}z^{m-p_{l-1}}w^{p_{l-1}} + 
t_1y^{l-2}z^{m+1-p_{l-2}}w^{p_{l-2}} + 
\cdots + t_{q-1}y^{l-q}z^{m+q-1 - p_{l-q}}
w^{p_{l-q}}. \]
Then $\tI$ is generated by $x^2$, $xy^l$, and 
\[ G_0 = G =  t_0xy^{l-1}z^{m-p_{l-1}}w^{p_{l-1}} + \cdots 
+ t_{q-1}xy^{l-q}z^{m+q-1 - p_{l-q}}w^{p_{l-q}} + sy^{m+l}. \]

Via a ring homomorphism $R = \kk[s,t_0, \ldots, t_{l-q}] \pil \kk[t, t^{-1}]$,
the family of ideals $\tI$ maps to 
a family of ideals $\tI_t$ parametrized by a rational parameter $t$. This 
new family has a limit ideal when $t \pil \infty$. We shall consider
monomial maps where $s \mapsto t^w$ and $t_i \mapsto t^{w_i}$.
When  $w$ and the $w_i$'s are sufficiently general this limit will be
a monomial ideal. 

 Now let $R$ have a monomial order. Given an integer $N$, by D.Bayer's
thesis, \cite{Ba}, there are
integer weights $w$ and $w_i$'s such that for two monomials of degrees 
$\leq N$, 
\[ m_\bfa = s^at_0^{a_i}\cdots t_{l-q}^{a_{l-q}}, \quad
  m_\bfb = s^bt_0^{b_i}\cdots t_{l-q}^{b_{l-q}}, \] 
we have $m_\bfa > m_\bfb$ iff the scalar products
$wa + \sum w_i a_i > wb + \sum w_i b_i$. 
%{\color{red} The meaning of the following sentence is clear, but I do not understand its syntax: 
For this set of weights associate the map $R \pil \kk[t, t^{-1}]$ given by 
$s \mapsto t^w$ and 
$t_i \mapsto t^{w_i}$. 
We then get a monomial ideal as the limit of $\tI_t$ 
 when $t \pil \infty$.} This
limit ideal depends only on the family $\tI$ 
and the monomial ordering on $R$ and not
on the choice of weights $w$ and $w_i$'s.

We shall in the next sections
consider various monomial orders on $R$ and find the limit
ideals associated to these orders.

\medskip

The ideal $\tI$ contains $yG_0$. Since $xy^l$ is in $\tI$, this
is congruent modulo $\tI$ to
\[ G_1 = t_1xy^{l-1}z^{m+1-p_{l-2}}w^{p_{l-2}} + \cdots 
+ t_{q-1}xy^{l-q+1}z^{m+q-1 - p_{l-q}}w^{p_{l-q}} + sy^{m+l+1}. \]
More generally $y^r G_0$ is congruent modulo $\tI$ to 
\[ G_r =  t_rxy^{l-1}z^{m+r-p_{l-1-r}}w^{p_{l-1-r}} + \cdots 
+ t_{q-1}xy^{l-q+r}z^{m+q-1 - p_{l-q}}w^{p_{l-q}} + sy^{m+l+r}. \]
Note that all the polynomials $G_0, G_1, \ldots, G_{l-q}$ are in $\tI$.

\subsection{Equations and limits when $q = 2$}
We let $q = 2$, $p_1 = i$ and $p_0 = 0$. In this case we get
\begin{eqnarray*}
G_0 & = & t_0xy^{l-1}z^{m-i}w^i + t_1 xy^{l-2}z^{m+1} + s y^{l+m} \\
G_1 & = & t_1xy^{l-1}z^{m+1} + s y^{l+m+1}.
\end{eqnarray*}

In order to eliminate $xy^{l-1}$ from these equations we let
\begin{eqnarray*}
G_{01} &=& t_1z^{i+1} G_0 - t_0w^i G_1 \\
       &=& t^{2}_1xy^{l-2}z^{m+2+i} + st_1y^{m+2}z^{i+1} 
- st_0y^{l+m+1}w^i.
\end{eqnarray*}

The following shows Theorem \ref{BorelThep1} and Theorem \ref{BorelThep2spes}.

\begin{proposition} Let $t_0 > t_1 > s$.  \label{EqProq2}
\begin{itemize}
\item[1.] When $t_1^2 > st_0$ the saturation of the limit ideal
is Borel with $\bfa = (m+2+i, m-i)$ and $\bfb = \bfo$. 
\item[2.] When $st_0 > t_1^2$ the $z$-transform
of the saturation of the limit ideal, has saturation a 
Borel with $\bfa = (m-i)$ and $\bfb = (i,0,\ldots,0)$.
\end{itemize}
\end{proposition}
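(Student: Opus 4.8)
The plan is to trace through the limiting process using the weight map associated to the monomial order $t_0 > t_1 > s$ on $R$, as set up in Section 4, and then read off the monomial ideal to which $\tI_t$ degenerates. First I would record what generators of $\tI$ are available: we have $x^2$, $xy^l$, the polynomial $G_0$ and its shift $G_1$, and the combination $G_{01}$ computed just above the statement. Multiplying $G_0$ by $y^l$ shows that $y^{2l+m}$ lies in $\tI$ (the argument from \ref{ExLigTominor}, which goes through verbatim since we only used the first two minors). So the limit ideal contains $x^2$, $xy^l$, $y^{2l+m}$, and then one must track the leading terms — with respect to the induced weights — of $G_0$, $G_1$, $G_{01}$, and any further $S$-polynomials forced by Buchberger's criterion.

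The core computation splits according to the sign of $w_1^2 - (w + w_0)$, where $w$, $w_0$, $w_1$ are the weights assigned to $s$, $t_0$, $t_1$; this sign is exactly what distinguishes case 1 ($t_1^2 > st_0$) from case 2 ($st_0 > t_1^2$). In case 1, in $G_{01}$ the term $t_1^2 xy^{l-2}z^{m+2+i}$ dominates, so $xy^{l-2}z^{m+2+i}$ enters the limit ideal; meanwhile in $G_0$ the term $t_0xy^{l-1}z^{m-i}w^i$ dominates, giving $xy^{l-1}z^{m-i}w^i$, and in $G_1$ the term $t_1xy^{l-1}z^{m+1}$ dominates, giving $xy^{l-1}z^{m+1}$, which is divisible by the previous one only after the $z$-transform — so here one keeps the honest limit ideal and passes to its \emph{saturation}. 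Saturating (dividing by the irrelevant ideal, i.e. removing the dependence on $w$) and checking that the resulting generators are $x^2$, $xy^{l-1}z^{m-i}$ (from the saturation of $G_0$'s leading term, $xy^{l-1}z^{m-i}w^i$), $xy^{l-2}z^{m+2+i}$, and $y^{2l+m}$, one sees this is the almost Borel ideal of the form \ref{BorelLigab} with $p=2$, $\bfa = (m+2+i,\, m-i)$ and $\bfb = \bfo$. One then verifies the numerical identity $(m+2+i)+(m-i) = 2m+2 = \sum_{j=0}^{1}(m+2j)$, so by the Saturation Lemma \ref{BorelLemIJK} this almost Borel ideal \emph{is} the saturation of the limit ideal, and being Borel it is the one we want; invoking Theorem \ref{BorelThep2} (or simply the fact that degeneration stays on the same component) finishes case 1.

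In case 2, the reversed inequality $st_0 > t_1^2$ changes which term of $G_{01}$ wins: now $st_0 y^{l+m+1}w^i$ (or $st_1 y^{m+2}z^{i+1}$, according to a further comparison of $w_0$ with $w_1 + $ shifts, which the general-weights argument of Bayer lets us arrange cleanly) dominates, so the limit ideal acquires a generator divisible by $w$, and it is the \emph{$z$-transform} — substituting $w \mapsto w + \lambda z$ and taking the initial ideal for a $z > w$ order, as in the Lemma preceding the Saturation Lemma — that produces the clean monomial ideal: the $w^i$ in $xy^{l-1}z^{m-i}w^i$ from $G_0$ becomes $z^i$, yielding $xy^{l-1}z^m$, wait — one must be careful, and here the term of $G_{01}$ that survives is the one contributing $y^{l+m+1}z^i$ after the $z$-transform, i.e. a generator $y^{l+m+1}z^i$, giving $\bfb = (i, 0, \ldots, 0)$ while $\bfa = (m-i)$ coming from $xy^{l-1}z^{m-i}$. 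Again one checks $\sum a_j + \sum b_j = (m-i) + i = m = \sum_{j=0}^{0}(m+2j)$, so $p = 1$ and the Saturation Lemma \ref{BorelLemIJK} (the $z$-transform version) identifies this as the saturation of the $z$-transform. The main obstacle I anticipate is bookkeeping: making sure that no $S$-polynomial beyond $G_{01}$ contributes a new leading monomial outside the claimed ideal, and that the generic choice of Bayer weights genuinely resolves all the tie-breaking comparisons (in particular the comparison $w_1 + (i+1)\,\mathrm{wt}(z)$ versus $w_0 + i\,\mathrm{wt}(w)$ inside $G_{01}$) in the direction dictated by the stated strict inequalities $t_1^2 > st_0$ or $st_0 > t_1^2$; once that is pinned down, the Saturation Lemma does the rest and no further computation is needed.
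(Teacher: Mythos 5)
Your overall route is the paper's route: read off the limit monomials of $G_0$, $G_1$, $G_{01}$ under the two orderings of $t_0,t_1,s$, throw in $x^2$, $xy^l$, $y^{2l+m}$, and conclude with the Saturation Lemma \ref{BorelLemIJK}. Your case 1 is essentially identical to the paper's: the limit contains $xy^{l-1}z^{m-i}w^i$ (from $G_0$), $xy^{l-1}z^{m+1}$ (from $G_1$) and $xy^{l-2}z^{m+2+i}$ (from $G_{01}$, since $t_1^2>st_0>st_1$), and it is precisely the \emph{pair} of the first two monomials that puts $xy^{l-1}z^{m-i}$ in the saturation — not, as you write, the ``saturation of $G_0$'s leading term'' alone, and saturation is not ``removing the dependence on $w$''; but the ingredients are all on the page. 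Note also that your worry about further $S$-polynomials is unfounded: the Saturation Lemma only needs the one-sided inclusion of the almost Borel ideal with the correct numerical sum, which is exactly why no Buchberger bookkeeping is required (and why the final appeal to Theorem \ref{BorelThep2} is superfluous — the proposition is only an identification of the saturated limit).

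In case 2 there is a genuine unresolved step, which you flag yourself (``yielding $xy^{l-1}z^m$, wait --- one must be careful'') and then paper over: you assert the generator $xy^{l-1}z^{m-i}$ ``coming from $xy^{l-1}z^{m-i}$'' without saying where a monomial with $z$-exponent $m-i$ (rather than $m$) comes from, since the naive $z$-transform of $xy^{l-1}z^{m-i}w^i$ is $xy^{l-1}z^m$. The paper's proof gets this from the saturation, before any $z$-transform enters: because both $xy^{l-1}z^{m-i}w^i$ and $xy^{l-1}z^{m+1}$ lie in the limit (this part of the argument is common to both cases and is stated once, at the start of the proof), the saturation of the limit already contains $xy^{l-1}z^{m-i}$; the $z$-transform is needed only to convert the $\bfb$-type monomial $y^{l+m+1}w^i$ coming from $G_{01}$ (whose leading coefficient is $st_0$, uniquely — your hedge towards $st_1y^{l+m}z^{i+1}$ cannot occur since $t_0>t_1$) into $y^{l+m+1}z^i$. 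So the missing sentence in your case 2 is exactly the paper's first sentence; with it, the numerical check $(m-i)+i=m$ and the Saturation Lemma finish the argument as you indicate.
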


\begin{proof} By $G_1$ the limit contains $xy^{l-1}z^{m+1}$, and
by $G_0$ the limit contains $xy^{l-1}z^{m-i}w^i$. Hence the
saturation of the limit contains $xy^{l-1}z^{m-i}$.
Also considering $y^2G_0$ we see that the limit contains $y^{m+l+2}$.

In case 1. the limit contains $xy^{l-2}z^{m+2+i}$ by 
$G_{01}$.
Hence the saturation of the limit contains
\[ xy^{l-2}z^{m+2+i}, xy^{l-1}z^{m-i}, y^{l+m+2}, \]
and by the Saturation Lemma \ref{BorelLemIJK} applied to the appropriate ideal
$K$ as in \ref{BorelLigab} with $p=2$, we get part 1.

In case 2. the limit contains $y^{l+m+1}w^i$ by $G_{01}$. 
So the $z$-transform of the saturation of the limit contains
\[ xy^{l-1}z^{m-i}, y^{l+m+1}z^i, y^{l+m+2}. \]
Hence by the again by the Saturation Lemma \ref{BorelLemIJK} 
applied to the appropriate ideal
$K$ as in \ref{BorelLigab} with $p=1$, we get part 1.
\end{proof}

\subsection{The equations when $q = 3$.}

We let $p_2 = i$,  $p_1 = j$, and $p_0 = 0$. 
Also let $\Delta = i - 2j$, the second difference of the $p$'s. 
The ideal $\tI$ contains:
\begin{eqnarray*}
G_0 &=& t_0 xy^{l-1}z^{m-i}w^i + t_1 xy^{l-2}z^{m+1-j}w^j + t_2xy^{l-3}z^{m+2}
+ sy^{l+m} \\
G_1 & = & t_1xy^{l-1}z^{m+1-j}w^j + t_2 xy^{l-2}z^{m+2} + sy^{l+m+1} \\
G_2 & = & t_2xy^{l-1}z^{m+2} + sy^{l+m+2}.
\end{eqnarray*}

We now eliminate $xy^{l-1}$ from $G_0$ and $G_1$. When $i \geq j$ we let
\begin{eqnarray*}
G_{01} &=& t_1 z^{1+i-j}G_0 - t_0w^{i-j}G_1\\
  &=& t_1^2xy^{l-2}z^{m+2+\D}w^j + t_1t_2xy^{l-3}z^{m+3+\D + j} + st_1  
y^{l+m} z^{1+\D + j}\\
 &-& t_0t_2xy^{l-2}z^{m+2}w^{\D + j} - st_0y^{l+m+1}w^{\D + j} .
\end{eqnarray*}

We then obtain when $i \geq j$ that
\begin{eqnarray} \notag
xy^{l-2}z^{m+2+\D}w^j &\equiv& \frac{t_0t_2}{t_1^2}xy^{l-2}z^{m+2}w^{j+\D} - 
\frac{t_2}{t_1}x^{l-3}z^{m+3+\D+j}
- \frac{s}{t_1}y^{m+l}z^{1+\D + j}  \\ \label{Limq3LigBgG01}
&+& \frac{st_0}{t_1^2}y^{m+l+1}w^{\D+j} \pmod {\tI}
\end{eqnarray}

We eliminate $xy^{l-1}$ from $G_1$ and $G_2$ by letting
\begin{eqnarray*}
G_{12} & = & t_2z^{1+j}G_1 - t_1w^jG_2 \\
 & = & t_2^2 xy^{l-2}z^{m+3+j}  + st_2 y^{l+m+1} z^{1+j} - 
st_1y^{l+m+2} w^j.
\end{eqnarray*}

From this we obtain
\begin{equation} \label{Limq3LigBgG12}
xy^{l-2}z^{m+3+j} \equiv \frac{st_1}{t_2^2} y^{m+l+2}w^j 
- \frac{s}{t_2}y^{m+l+1}z^{1+j} 
\pmod {\tI}
\end{equation}

\medskip
Now we want to eliminate $xy^{l-1}$ and $xy^{l-2}$ from the equations
of $G_0, G_1$, and $G_2$. By taking $2 \times 2$-minors of the 
coefficients of these monomials we let 
\[ G_{012} = t_2^2z^{2m+4} G_0 - t_1t_2z^{2m+3-j}w^j G_1
+(t_1^2z^{2m+2-2j}w^{2j} - t_0t_2z^{2m+2-i} w^i)G_2.\]
%When $\Delta \geq 0$, which is equivalent to $i \geq 2j$ we can
%factor out $z^{2m+2-i}$ from the coefficients and let
%\begin{eqnarray*} G_{012}^+ &=& G_{012}/z^{2m+2-i} \\
%&=& t_2^3xy^{l-3}z^{m+4+i} + st_2^2y^{l+m}z^{i+2} - 
%st_1t_2 y^{l+m+1}z^{1+i-j}w^j \\
%&+&st_1^2y^{l+m+2}z^{i-2j}w^{2j} - st_0t_2
%y^{l+m+2}w^i.
%\end{eqnarray*}

When $\Delta \leq 0$, which is equivalent to $i \leq 2j$ 
we can factor out $z^{2m+2-2j}$ from the coefficients and let
\begin{eqnarray*} G_{012}^- &=& G_{012}/z^{2m+2-2j} \\
&=& t_2^3xy^{l-3}z^{m+4+2j} + st_2^2y^{l+m}z^{2j+2} - 
st_1t_2 y^{l+m+1}z^{1+j}w^j \\
&+&st_1^2y^{l+m+2}w^{2j} - st_0t_2
y^{l+m+2} z^{2j-i}w^i.
\end{eqnarray*}

\subsection{Proof of Theorem \ref{BorelThep2}}
Until now we have assumed monomials containg
$s$ always to be smaller than those without $s$. Now we drop this
asumption but continue to assume $t_0 > t_1 > t_2$. 
Our aim is now to prove Theorem \ref{BorelThep2}.

\begin{proposition} \label{Limq3ProP2A} Let $i \geq j$. If  $t_2 > s$, and 
$st_1 > t_2^2$,
the saturation of the limit ideal has $z$-transform whose saturation is 
the Borel ideal with $\bfa = (m+2+i-j, m-i)$ and $\bfb = (j)$.
\end{proposition}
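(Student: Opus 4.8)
The plan is to follow the template established in the preceding propositions of this section: identify enough monomials forced into the limit ideal (or its $z$-transform), then invoke the Saturation Lemma~\ref{BorelLemIJK} to conclude that the resulting almost Borel ideal is exactly the saturation. Since we are now allowing monomials containing $s$ to be comparable with those not containing $s$, the key new ingredient is that with $s$ not forced small, the equations $G_1$, $G_{12}$ and $G_{01}$ contribute terms that previously vanished modulo $s$. First I would record what is unconditionally in the limit: by $G_0$ (the leading term being $t_0xy^{l-1}z^{m-i}w^i$ under $t_0 > t_1 > t_2$) together with $G_2$, the saturation contains $xy^{l-2}z^{m-i}$ after $z$-transform — actually, more carefully, since $l = 3$ we may assume throughout, $G_0$ gives $xyz^{m-i}w^i$ and $G_2$ gives $xyz^{m+2}$, so the saturated limit contains $xyz^{m-i}$; hence its $z$-transform contains $xyz^{m-i}$ as well, giving the third generator exponent $m-i$ (reading the tuple as $\bfb$-entry... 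I will be careful to match the indexing of \ref{BorelLigab}).

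Next I would use equation \ref{Limq3LigBgG12}: under the hypothesis $t_2 > s$ and $st_1 > t_2^2$, the dominant term on the right-hand side is $\tfrac{st_1}{t_2^2}y^{m+5}w^j$, so in the limit $xyz^{m+3+j} \equiv y^{m+5}w^j$ (up to scalar), whence after $z$-transform the monomial $y^{m+5}z^j = y^{l+m+2}z^j$ enters — but I should double-check the degree bookkeeping, since with $l=3$ we have $m+5 = l+m+2$, consistent with a $y$-type generator carrying $z^j$, i.e. the entry $b_p = j$. Then from \ref{Limq3LigBgG01}, with the same weights and under $t_2 > s$, $st_1 > t_2^2$, I expect one of the monomial terms — specifically $\tfrac{t_2}{t_1}xz^{m+3+\D+j}$ versus the $s$-terms — to dominate in a way that, combined with the already-established relation $xyz^{m+3+j}\equiv 0$ in the $z$-transform (from the previous paragraph, once we know $y^{m+5}w^j$ or rather its $z$-transform sits in the ideal and the length-finiteness argument applies), forces $xyz^{m+2+\D}w^j$, hence after $z$-transform $xyz^{m+2+\D} = xyz^{m+2+i-2j}$, into the saturated $z$-transform. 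This $z$-exponent $m+2+\D$ combined with $m-i$ from the first step and the fact that $xy^2$ divides nothing forced below — wait, I need the middle generator $xy^{l-p}z^{a_0}$ as well; with $p = 2$ here (the tuple $\bfa = (m+2+i-j, m-i)$ has two entries and $\bfb = (j)$ has one, so $p = 2$ and $l - p = 1$), the generator $xy^{l-2} = xy$ must carry $z^{m+2+i-j}$, which should come from $G_{01}$ after eliminating lower $y$-powers, or from the $z$-transform of a term in \ref{Limq3LigBgG01} after multiplying by an appropriate power of $z$ and using $xyz^{m+3+j}\equiv 0$.

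Once the three monomials $xy z^{m+2+i-j}$, $xz^{m-i}$-type (the $\bfa = (\ldots, m-i)$ entry), and $y^{l+m+2}z^{j}$ are all seen to lie in the $z$-transform of the limit (together with the automatic $x^2, xy^l, y^{2l+m}$), the $z$-transform contains the almost Borel ideal $K$ with $\bfa = (m+2+i-j, m-i)$, $\bfb = (j)$; a check that $(m+2+i-j) + (m-i) + j = 2m+2 = \sum_{i=0}^{1}(m+2i)$ confirms $K$ has the right Hilbert polynomial, so Lemma~\ref{BorelLemIJK} identifies $K$ as exactly that saturation, and then the $z$-transform Lemma (the one just before \ref{BorelLemIJK}) puts $I$ and $K$ on the same component as $J(l,m)$. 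The main obstacle I anticipate is the careful dominance analysis in \ref{Limq3LigBgG01}: there are four competing terms with coefficients mixing $s, t_0, t_1, t_2$, and I must verify that the two inequalities $t_2 > s$ and $st_1 > t_2^2$ (which do not by themselves totally order all four monomials — e.g. the relative size of $s t_0 w^{\D+j}/t_1^2$ versus $t_2 z^{\D}/t_1$ depends on further choices) are nonetheless enough to pin down the leading monomial of each $G$-relation consistently, possibly after multiplying by suitable powers of $z$ and repeatedly using $xyz^{m+3+j}\equiv 0$ to kill the troublesome high-$z$ terms; this is the same style of argument as in Propositions~\ref{Limq3ProPDpos} and~\ref{Limq3Proileqj} but with the extra $s$-terms in play.
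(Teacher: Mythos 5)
Your overall strategy is the paper's: exhibit enough monomials in the limit ideal (respectively its $z$-transform), check $\sum a_i+\sum b_i=(m+2+i-j)+(m-i)+j=2m+2$, and conclude by the Saturation Lemma \ref{BorelLemIJK}. Two of the three essential monomials you do reach, modulo a bookkeeping slip: with $l=3$ the leading terms of $G_0$ and $G_2$ are $xy^{2}z^{m-i}w^i$ and $xy^{2}z^{m+2}$ (not $xyz^{m-i}w^i$ and $xyz^{m+2}$ as written), which give $xy^2z^{m-i}$ in the saturation; and since $st_1>t_2^2$ and $st_1>st_2$, the dominant term of $G_{12}$ in \ref{Limq3LigBgG12} is $st_1y^{m+5}w^j$, so $y^{m+5}w^j$ is in the limit and $y^{m+5}z^j$ in its $z$-transform.

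The genuine gap is the generator $xyz^{m+2+i-j}$, which your argument never actually produces. First, your $z$-transform computation is wrong: the $z$-transform of $xyz^{m+2+\D}w^j$ adds the $w$-exponent, giving $xyz^{m+2+\D+j}=xyz^{m+2+i-j}$, not $xyz^{m+2+i-2j}$; the latter is not even in the claimed Borel ideal, so if it truly lay in the saturated $z$-transform the statement would be contradicted. Second, the relation $xyz^{m+3+j}\equiv 0$ that you propose to reuse was established only modulo $(\tI,s)$ in the earlier subsections; here $s$ is not forced small, and indeed $xyz^{m+3+j}$ need not lie in the saturated $z$-transform at all when $i>2j+1$, so the "kill high-$z$ terms" manoeuvre is unavailable. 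Third, you flag the incomplete ordering of the coefficients of $G_{01}$ as an unresolved obstacle, but that is exactly where the proof happens, and it resolves without any iteration: under $t_0>t_1>t_2>s$ the largest of the five coefficients $t_1^2,\,t_0t_2,\,t_1t_2,\,st_1,\,st_0$ occurring in \ref{Limq3LigBgG01} is either $t_1^2$ or $t_0t_2$ (the only undetermined comparison, since $t_1^2>t_1t_2>st_1$ and $t_0t_2>st_0$, and whichever of the two is maximal also beats $st_0$ because $t_2>s$). Accordingly the limit contains either $xyz^{m+2+\D}w^j$ or $xyz^{m+2}w^{j+\D}$, and both have the same $z$-transform $xyz^{m+2+i-j}$, so the unresolved comparison is harmless. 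That two-case observation, rather than the mixed-term dominance involving $xz^{m+3+\D+j}$ (whose coefficient $t_1t_2$ is never maximal) or a repeated $z$-multiplication scheme, is what completes the proof.
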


This proves Part 1. of Theorem \ref{BorelThep2}.

\begin{proof}
Assume first  $t_1^2 > t_0t_2$. The following monomials are in  the limit.
\begin{itemize}
\item [1.] $xy^{l-2}z^{m+2+\D}w^j$ by \ref{Limq3LigBgG01}, since
$t_1^2 > t_0t_2 > t_0s$.
\item [2.] $xy^{l-1}z^{m-i}w^j$ by considering $G_0$.
\item [3.] $xy^{l-1}z^{m+2}$ by considering $G_2$.
\item [4.] $y^{m+l+2}w^j$ by \ref{Limq3LigBgG12}.
\end{itemize}

By 2. and 3.,$xy^{l-1}z^{m-i}$ is in the saturation of the limit.
The $z$-transform of this ideal then contains
\[ xy^{l-2}z^{m+2+i-j}, xy^{l-1}z^{m-i}, y^{m+l+2}z^j \]
and so by the Saturation Lemma \ref{BorelLemIJK},
we obtain the statement in this case.

\medskip
When $t_0t_2 > t_1^2$ we still have $t_0 t_2 > t_0s$. 
The only change to the above is that by
\ref{Limq3LigBgG01} the limit contains $xy^{l-2}z^{m+2}w^{\D +j}$ instead
of $xy^{l-2}z^{m+2+\D}w^j$, but both have the same $z$-transform, giving
the result.
\end{proof}

\begin{proposition} \label{Limq3ProP2B}
Let $j \leq i \leq 2j$. Assume
\[ t_1t_2^2 > st_1^2 > t_2^3,  \quad t_1^2 > t_0t_2.  \]
%\[ 2(a_2 - a_1) > b-a_2 > a_2 - a_1 > a_1 - a_0. \]
%(Then $\delta^\prime >0$ and $\delta > 0$. ) 
The $z$-transform of the limit ideal
has as saturation the Borel ideal with $\bfa = (m+2+i-2j, m-i)$ and
$\bfb = (2j)$.
\end{proposition}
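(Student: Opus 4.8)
The plan is to make the limit ideal of $\tI$ explicit enough to read off its saturation, then apply the $z$-transform and conclude with the Saturation Lemma \ref{BorelLemIJK}, exactly as in the proof of Proposition \ref{Limq3ProP2A}. As throughout this section we take $l=3$. The family $\tI$ consists of ACM codimension-two ideals with the same resolution as $\Bo(l,m)$, so the limit ideal lies on $H(l,m)$ and has Hilbert polynomial $HP_{\Bo(l,m)}$; the target is the Borel ideal
\[ K \;=\; (x^2,\; xyz^{m+2+i-2j},\; xy^2z^{m-i},\; xy^3,\; y^{m+5}z^{2j},\; y^{m+6}), \]
that is, the one with $\bfa=(m+2+i-2j,m-i)$ and $\bfb=(2j)$. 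First I would spell out what the hypotheses force on the order of the $R$-monomials that occur. From $t_1t_2^2>st_1^2$ we get $t_2^2>st_1$; since $t_2^3<st_1^2<t_1t_2^2$ forces $t_2<t_1$, this also gives $t_2>s$, so $t_0>t_1>t_2>s$. From $t_1^2>t_0t_2$ and $t_2>s$ we get $t_1^2>t_0t_2>st_0$ and hence $st_1^2>st_0t_2$; together with $st_1^2>t_2^3$ and $t_1>t_2$ this shows that $st_1^2$ exceeds each of $t_2^3$, $st_2^2$, $st_1t_2$, $st_0t_2$.

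Next I would locate, for each relevant element of $\tI$, the monomial that survives in the limit. Since $\D=i-2j\le 0$ the governing equations are $G_0,G_1,G_2$, the elimination $G_{01}$ in the form \ref{Limq3LigBgG01}, the elimination $G_{12}$ in the form \ref{Limq3LigBgG12}, and the triple elimination $G_{012}^-$. The monomials $x^2$ and $xy^3$ lie in $\tI$, while $y^3G_0\equiv sy^{m+6}$ modulo $xy^3$, so $x^2,xy^3,y^{m+6}$ are in the limit. In $G_0$ the coefficient $t_0$ is largest, so $xy^2z^{m-i}w^i$ is in the limit; in $G_2$ the coefficient $t_2$ beats $s$, so $xy^2z^{m+2}$ is in the limit. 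In $G_{01}$ the coefficient $t_1^2$ beats $t_0t_2$, $t_1t_2$, $st_1$, $st_0$, so by \ref{Limq3LigBgG01} the monomial $xyz^{m+2+i-2j}w^j$ is in the limit. In $G_{12}$ the coefficient $t_2^2$ beats $st_1$ and $st_2$, so by \ref{Limq3LigBgG12} the monomial $xyz^{m+3+j}$ is in the limit. Finally, in $G_{012}^-$ the coefficient $st_1^2$ is the largest, so $y^{m+5}w^{2j}$ is in the limit.

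Then I would pass to the saturation of the limit and the $z$-transform. Since $xy^2z^{m-i}w^i$, $xy^2z^{m+2}$, $x^2$, $xy^3$ are in the limit, all sufficiently high-degree multiples of $xy^2z^{m-i}$ lie in the limit, so $xy^2z^{m-i}$ is in the saturation; likewise, since $xyz^{m+2+i-2j}w^j$, $xyz^{m+3+j}$ (note $m+3+j\ge m+2+i-2j$ as $i\le 3j+1$), $x^2$, $xy^3$ are in the limit, $xyz^{m+2+i-2j}$ is in the saturation. The $z$-transform fixes the pure monomials $x^2,xy^3,y^{m+6},xy^2z^{m-i},xyz^{m+2+i-2j}$ and sends $y^{m+5}w^{2j}$ to $y^{m+5}z^{2j}$, so the $z$-transform of the saturated limit contains $K$. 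Since $(m+2+i-2j)+(m-i)+2j=2m+2=\sum_{i=0}^{1}(m+2i)$, the ideal $K$ has the same Hilbert polynomial as $\Bo(l,m)$, so by the Saturation Lemma \ref{BorelLemIJK} it is the saturation of the $z$-transform of the limit; as $K$ is Borel this proves the proposition, and hence Part 2 of Theorem \ref{BorelThep2}.

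I expect the main obstacle to be the bookkeeping in the two middle steps: deducing the full chain $t_0>t_1>t_2>s$ and all the auxiliary inequalities from just the three given ones, and then pinpointing the winning monomial in each $G$. The genuinely delicate point is the generator $y^{m+5}z^{2j}$: here it does \emph{not} come from $G_{12}$ --- there $t_2^2>st_1$, so $G_{12}$ contributes the $x$-type monomial $xyz^{m+3+j}$ instead --- but from $G_{012}^-$, whose surviving monomial is $y^{m+5}w^{2j}$ precisely because $st_1^2>t_2^3$. Thus the two halves of the hypothesis $t_1t_2^2>st_1^2>t_2^3$ play complementary roles: the left inequality keeps $G_{12}$ out of the $y$-range, the right one pushes $G_{012}^-$ into it.
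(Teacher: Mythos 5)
Your argument is correct and is essentially the paper's own proof: the paper likewise extracts the four key monomials $xy^2z^{m-i}$ (from $G_0$ and $G_2$, via the Basic Limits Lemma), $xyz^{m+2+i-2j}w^j$ from \ref{Limq3LigBgG01}, $xyz^{m+3+j}$ from \ref{Limq3LigBgG12}, and $y^{m+5}w^{2j}$ from $G_{012}^-$ (using $st_1^2>st_0t_2,\,t_2^3$), combines the middle two to put $xyz^{m+2+i-2j}$ in the saturation, and concludes by the $z$-transform and the Saturation Lemma \ref{BorelLemIJK}; your write-up just makes the inequality bookkeeping and the generators $x^2, xy^3, y^{m+6}$ explicit. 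The only slip is that $t_0>t_1$ does not follow from the three displayed inequalities — it is the standing assumption $t_0>t_1>t_2$ of this subsection, which is exactly what the paper also invokes for the $G_0$ step — so your chain $t_0>t_1>t_2>s$ should be attributed partly to that hypothesis rather than derived.
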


This proves Part 2. of Theorem \ref{BorelThep2}

\begin{proof} Note that $t_2^2 > st_1$ implies $t_2 > s$.
The following monomials are in the saturation of the limit.
\begin{itemize}
\item [1.] $xy^{l-1}z^{m-i}$ by considering $G_0$ and $G_2$.
\item [2.] $xy^{l-2}z^{m+2+\D}w^{j}$ by \ref{Limq3LigBgG01}.
\item [3.] $xy^{l-2}z^{m+3+j}$ by \ref{Limq3LigBgG12}.
\item [4.] $y^{m+l+2}w^{2j}$ by $G_{012}^-$ since $st_1^2$ is greater than
$st_0t_2$ and $t_2^3$.
\end{itemize}

Parts 2. and 3. above give that $xy^{l-2}z^{m+2+\D}$ is in the saturation.
Taking the $z$-transform of the ideal, we get the statement.
\end{proof}

\section{A family of limit ideals when $q = l$}

In this section we prove Theorem \ref{BorelThePart}.
%Note that it is a generalization of  Corollary \ref{Limq3CorBLL} a.
We assume that
\begin{equation}
p_0 \leq p_1 +1 \leq p_2 + 2 \leq \cdots \leq p_{l-1} + l-1, 
\label{FamLabps} 
\end{equation}
\begin{equation}
\label{FamLabpmin}
p_0 = \min \{ p_i \, | \, i + 0, \ldots, l-1\}, 
\end{equation}
\begin{equation}
\label{FamLabt}
t_0 > t_1 > \cdots > t_{l-1} > s. 
\end{equation}
When 
\[ \lambda : \la_1 \geq \la_2 \geq \cdots \geq \la_k (\geq 0) \] 
is a partition
consisting of $k$ parts of sizes $\leq l-1$, we let $p_\la
= \sum p_{\la_i}$ and the monomial $t_\la = \prod t_{\la_i}$. We also let 
$\overline{\la}$ be the complementary partition given by 
$\overline{\la}_i = l-1-\la_{k+1-i}$ for $i = 1, \ldots, k$. 

Let us state the result a bit more elaborately.

\begin{theorem}
Assume that in addition to \ref{FamLabps}, \ref{FamLabpmin}, and
\ref{FamLabt}, for each $r = 0, \ldots, l-1$ 
\begin{itemize}
\item [1.] $r p_{l-r} \geq p_\la$ for all partitions $\la$ of $r(l-r)$ 
into $r$ parts of sizes $\leq l-1$. 
\item [2.] $t_r^{r+1} > t_\lambda$ for all partitions $\la$ of
$r(r+1)$ into $r+1$ parts of sizes $\leq l-1$. 
\item [3.] All monomials containing $s$ are smaller than monomials
without $s$. 
\end{itemize}
Then the limit ideal has as saturation the Borel ideal with  $p = l-1$
and 
\[ a_i = m+2(l-1-i) + (l-1-i) p_{i+1} - (l-i)p_i + p_0. \]
\end{theorem}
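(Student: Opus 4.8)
The plan is to identify the Borel ideal $J$ of the statement with the saturation of the $z$-transform of the limit ideal $\tI_\infty$ of the family $\tI$ (when $p_0=0$ no $z$-transform is needed and $\tI_\infty$ itself works, which for $l=3$ is exactly Corollary \ref{Limq3CorBLL}), and then to conclude: by the Saturation Lemma \ref{BorelLemIJK} once the generators of $J$ are accounted for, and by the fact that the $z$-transform and saturation do not change the Hilbert-scheme component, while $\tI_\infty$ lies on the component of the generic member $\tI_t$, an ACM curve on a quadric, hence on $H(l,m)$.

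First I would settle the numerics. The hypothesis $p_0\le p_1+1\le\cdots\le p_{l-1}+l-1$ and the concavity $2p_i\ge p_{i-1}+p_{i+1}$ forced by hypothesis (1) (apply it to the partition of $r(l-r)$ with parts $l-r+1,\,l-r,\ldots,l-r,\,l-r-1$) give the estimate $a_k-a_{k+1}\ge 2+(p_{k+2}-p_k)\ge 0$, so $a_0\ge a_1\ge\cdots\ge a_{l-1}\ge 0$; and the telescoping computation of the Lemma following \ref{BorelLigab} gives $\sum_{i=0}^{l-1}a_i=\sum_{i=0}^{l-1}(m+2i)$. Hence $J$ is an almost Borel ideal of the form \ref{BorelLigab} with $p=l$, $\bfb=\bfo$, and the Hilbert polynomial of $\Bo(l,m)$, and by the Saturation Lemma it suffices to put every minimal generator of $J$ into the saturation of $\tI_\infty$ (resp.\ of its $z$-transform). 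Here $x^2$ and $xy^l$ are monomials of $\tI$, hence of $\tI_\infty$ and its $z$-transform, and $y^{2l+m}\in\tI_\infty$ because $y^lG_0\equiv sy^{2l+m}\pmod{\tI}$ while $s$ maps to a unit of $\kk[t,t^{-1}]$. So the real task is the monomials $xy^kz^{a_k}$ for $k=0,\ldots,l-1$.

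To produce these I would carry out the eliminations of Section 4 systematically. For each $s=1,\ldots,l-1$ form the Cramer--cofactor combination $G_{0,1,\ldots,s}$ of $G_0,\ldots,G_s$ killing the coefficients of $xy^{l-1},\ldots,xy^{l-s}$, generalizing $G_{01},G_{12},G_{012}$. Since the coefficient matrix of $G_0,\ldots,G_s$ in the variables $xy^{l-1},\ldots,xy^{l-s}$ is anti-triangular, $G_{0,1,\ldots,s}$, after division by the largest common power of $z$, has a distinguished term on $xy^{l-1-s}$ whose coefficient is $t_s^{s+1}z^{\ast}w^{\ast}$ (the anti-diagonal contribution), all other terms either carrying $s$ — hence smaller by hypothesis (3) — or carrying a $t$-monomial $t_\lambda$ with $\lambda$ a partition of $s(s+1)$ into $s+1$ parts of sizes $\le l-1$; hypothesis (2) with $r=s$ makes $t_s^{s+1}$ dominate these. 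Taking $s=l-1$, where the only non-$s$ term is the distinguished one, gives a monomial $xz^{b'}w^{c'}\in\tI_\infty$ with $b'+c'\ge a_0$ — bounding the factored-out power of $z$ is where hypothesis (1) enters — so that its $z$-transform $xz^{\,b'+c'}$ lies in $\tI_\infty^{z\text{-tr}}$ with $z$-exponent $\ge a_0\ge a_k$ for every $k$. Multiplying by $y^k$ and using $x^2,xy^l$, one gets $xy^kz^{a_k}$ in the saturation of $\tI_\infty^{z\text{-tr}}$, and the Saturation Lemma identifies that saturation with $J$.

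The hard part is the previous paragraph's claim about $G_{0,1,\ldots,s}$: showing, uniformly in $l$ and $s$, that after clearing the common power of $z$ the distinguished term really has coefficient $t_s^{s+1}$ times the asserted power of $z$ (and a power of $w$) on $xy^{l-1-s}$, and that all competing terms are strictly smaller for an order compatible with (1)--(3). This is a matching problem between the $(s+1)\times(s+1)$ minors of the coefficient matrix — whose entries are monomials $t_rz^{\ast}w^{\ast}$ with $w$-exponents drawn from the $p_j$ — and the partition-indexed inequalities: the $t$-parts of the minors' terms range over $t_\lambda$ with $\lambda\vdash s(s+1)$ into $s+1$ parts $\le l-1$, their $z$-parts are governed through $p_\lambda$ by hypothesis (1), and hypothesis (2) isolates the balanced partition $(s,\ldots,s)$; one must simultaneously track the $w$-exponents, which is exactly why the $z$-transform is needed once the $p_i$ are not all zero. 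The fully worked-out case $l=3$ — Corollary \ref{Limq3CorBLL} a.\ and the propositions of Section \ref{Limitq3Sek} — is the model to imitate.
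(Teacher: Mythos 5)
Your overall strategy coincides with the paper's: form the Cramer-type eliminations $G_{0,\ldots,s}$ of $G_0,\ldots,G_s$, use hypothesis (3) to discard terms carrying $s$, hypothesis (2) to isolate the anti-diagonal (balanced-partition) coefficient $t_s^{s+1}$ among the competing $t_\lambda$, hypothesis (1) to control the common power of $z$ to be factored out, and finish with the $z$-transform and the Saturation Lemma. But the last step of your argument fails. You extract a monomial only from the deepest elimination $s=l-1$, namely $xz^{b'}w^{c'}$ with $b'+c'=a_0$, and then claim that multiplying by $y^k$ yields $xy^kz^{a_k}$ in the saturation of the $z$-transform. It does not: multiplying by $y^k$ gives $xy^kz^{a_0}$, and since in general $a_0>a_k$, the monomial $xy^kz^{a_k}$ is \emph{not} in the saturation of $(x^2,\,xy^l,\,xz^{a_0},\,y^{2l+m})$ --- for instance $xy^kz^{a_k}w^N$ is divisible by none of these generators for any $N$, so $xy^kz^{a_k}\cdot\mathfrak{m}^N$ is never contained in the ideal. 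Hence the containment $J\subseteq(\text{$z$-transform of the limit})^{sat}$ required by the Saturation Lemma is not established for $k\geq 1$ (and the Hilbert-polynomial count would in fact be violated if all exponents were $a_0$). The paper instead computes, for \emph{each} $r=0,\ldots,l-1$, the limit term of $G'_{0\ldots r}$, which is $xy^{l-r-1}z^{m+2r+rp_{l-r}-(r+1)p_{l-r-1}}w^{(r+1)p_{l-r-1}-rp_0}$; each of these, combined in the saturation with $y^{l-r-1}$ times the $r=l-1$ term so as to lower the $w$-exponent to $p_0$, and then $z$-transformed, yields exactly the generator $xy^{l-r-1}z^{a_{l-r-1}}$. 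You set up these eliminations for all $s$ but then use only $s=l-1$.

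A secondary gap: the combinatorial core --- that after dividing by $z^{r(m+r-1)-rp_{l-r}}w^{rp_0}$ the surviving non-$s$ terms of $G_{0\ldots r}$ are indexed by partitions of $r(r+1)$ into $r+1$ parts of size $\leq l-1$, that the minimal $z$-power over the minors $M_0,\ldots,M_r$ is attained in $M_r$ at the balanced partition (this is precisely where hypothesis (1) is used), and that the dominant surviving term sits on $xy^{l-1-r}$ with $t$-coefficient $t_r^{r+1}$ --- is only announced as ``the hard part \ldots to imitate,'' not carried out. The paper does exactly this bookkeeping with the $(r+1)\times r$ matrices $A(r)$ and their maximal minors $M_i$, so this part of your outline is consistent with the actual proof but remains a sketch.
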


\begin{proof}
Let $G = [G_0, G_1, \ldots, G_{l-1}]^t$ and 
$Y = [xy^{l-1}, xy^{l-2}, \ldots, x]^t$. 
Then $G = A Y + s E$
where $E = [y^{l+m}, y^{l+m+1}, \ldots, y^{2l+m-1}]^t$, and
$A$ is an $l\times l$ (symmetric) matrix with rows and columns indexed by 
$0, \ldots, l-1$, and the entry in position  $(i,j)$ is 
\[ t_{i+j} z^{m+i+j-p_{l-1-i-j}}w^{p_{l-1-i-j}}. \]
Let $A(r)$ for $1 \leq r \leq l-1$ 
be the submatrix of $A$ consisting of the first $r+1$ rows and $r$
columns, and let $M_i$ be the maximal minor of $A(r)$ obtained by 
omitting row $i$. We may eliminate  $xy^{l-1}, \ldots, xy^{l-r}$ from 
$G_0, G_1, \ldots, G_r$ by forming
\[ G_{01..r} = M_0 G_0 - M_1 G_1 + M_2 G_2 + \cdots + (-1)^r M_r G_r. \]
The minor $M_0$ will be an alternating sum of terms of the form
\[ t_{\overline{\la}} z^{r(m+r) - p_\la} w^{p_\la} \]
where the $\la$ are partitions of $r(l-r-1)$ into $r$ parts of sizes 
$\leq l-1$. 
In particular note that the term
\[ t_r^r z^{r(m+r) - r p_{l-r-1}} w^{rp_{l-r-1}} \]
occurs as the product of the elements on the anti-diagonals in the submatrix
of $A(r)$ we get by omitting the first row.  The terms of $M_i$ will be
a sum of terms 
\[ t_{\overline{\la}} z^{r(m+r) - i- p_\la} w^{p_\la} \]
where $\la$ is a partition of $r(l-r-1) + i$ into $r$ parts of sizes $\leq
l-1$. Note that if $i < r$ we can in each term above increase a suitable
part of $\la$ to obtain a partition $\la^\prime$ of $r(l-r-1) + i+ 1$.
Then 
\[ r(m+r) - i- p_\la \geq r(m+r) - i- 1 - p_{\la^\prime}. \]
So the lowest power of $z$ will occur in the minor $M_r$. By 
assumption 1. the term with the lowest power in $z$ is  
\[ t_{r-1}^r z^{r(m+r-1) - r p_{l-r}} w^{r p_{l-r}}. \]
%On the other hand the lowest power of $w$ occurs in the 
%minor $M_0$ and this is in the term
%\[ t_r^r z^{r(m+r) - rp_0}w^{rp_0}. \]
We may then divide each $M_i$ by 
the product $z^{r(m+r-1) - r p_{l-r}}$
and let $M_i^\prime$ be the quotient.
Now form  
\[ G_{01..r}^\prime = M_0^\prime G_0 - M_1^\prime G_1 + \ldots . \]
The question is now what is the limit of $G_{01..r}^\prime$. 
In $M_i^\prime$ all $t$-monomials are of the form $t_{\overline{\la}}$
where $\lambda$ is a partition of $r(l-r-1) + i$ into $r$ parts
of sizes $\leq l-1$. Recall that $G_i$ consists of terms
\[ t_{i+j}xy^{l-j-1}z^{m+i+j-p_{l-1-i-j}}w^{p_{l-1-i-j}}. \]All the terms containing
$xy^{l-1}, \ldots, xy^{l-r}$ are eliminated in $G_{01..r}^\prime$.
Therefore the largest term in $M_i^\prime G_i$ which does not get 
eliminated will occur in 
\begin{equation} \label{LimqlLigMt}
 M_i^\prime \cdot  t_{r+i} xy^{l-r-1} z^{m+r+i - p_{l-1-r-i}} w^{p_{l-1-r-i}}.
\end{equation}

Now $\overline{\la}$ is a partition of $r^2 - i$ into $r$ parts of
sizes $\leq l-1$. 
Then $\overline{\la}, r+i$ will be a partition of $r(r+1)$
into $r+1$ parts of sizes $\leq l-1$. By assumption 2. the largest
monomial in the $t$'s among the products \ref{LimqlLigMt}
occurs when $i = 0$ and 
\[\overline{\la} : r,r,\ldots, r,\] so 
\[\la : l-1-r, l-1-r, \ldots, l-1-r. \] 
Hence the limit term of $G_{01..r}^\prime$ 
will be the term occuring in $M_0^\prime G_0$:
\begin{eqnarray*}
& &  z^{r+rp_{l-r} - rp_{l-r-1}}w^{rp_{l-r-1}} \cdot xy^{l-r-1}
z^{m+r-p_{l-r-1}} w^{p_{l-r-1}} \\
& = & xy^{l-r-1} z^{m+2r+rp_{l-r} - (r+1)p_{l-r-1}}w^{(r+1)p_{l-r-1}}. 
\label{FamLigLang}
\end{eqnarray*}

When $r=l-1$, by assumption \ref{FamLabpmin}, we may note that the
minimum power of $w$ that can occur in an $M_i$ for $i = 0, \ldots, 
l-1$, is $w^{(l-1)p_0}$ occuring in $M_{l-1}$.
Thus we may divide out by this monomial also in 
all the $M_i^\prime$ and then obtain a limit term
\[ xz^{m+2(l-1) + (l-1)p_1 - lp_0} w^{p_0}. \]
Now consider the monomial 
\begin{equation} \label{FamLabxyr}
xy^{l-r-1} z^{m+2r+rp_{l-r} - (r+1)p_{l-r-1}}w^{p_0}.
\end{equation}

\begin{claim}
The sequence $m+2r +rp_{l-r} - (r+1)p_{l-r-1}$ is
weakly increasing for $r = 0, \ldots, l-1$.
\end{claim}

\begin{proof}
Consider $r$ in the range $1, \ldots, l-1$, and the equation we want to prove:
\[ 2(r-1) + (r-1)p_{l-r+1} - rp_{l-r} \leq 2r + rp_{l-r} - (r+1)p_{l-r-1}.\] 
Using that $p_{l-r-1} \leq p_{l-r} + 1$, this is implied by
\[ (r-1)p_{l-r+1} + (r-1)p_{l-r-1} \leq (2r-2) p_{l-r}. \]
(Note that although $p_{l-r+1}$ is not defined when $r = 1$, the 
coefficient above is zero.) Dividing out by $r-1$,
this is again a consequence of
\[ p_{l-r+1} + (r-2) p_{l-r} + p_{l-r-1} \leq r p_{l-r} \]
which holds by assumption 1. in the theorem.
\end{proof}

%\medskip
%When $r = l-1$ the limit term is 
%\[ xz^{m+2(l-1) + (l-1)p_1 - l p_0} w^{p_0}. \]
%For each $r = 1, \ldots, l-1$ we multiply this by $y^{l-r-1}$.
%Comparing with \ref{FamLigLang} we see that the saturation of the limit 
%contains the terms 
%\[ xy^{l-r-1} z^{m+2r+rp_{l-r} - (r+1)p_{l-r-1}}w^{p_0}.\]
%This also holds for $r=0$ by assigning $p_l = 0$, since this will
%be the limit of $G_0$.  

The monomials \ref{FamLabxyr} are therefore all in the saturation 
of the limit ideal.
Taking the $z$-transforms they become
\[  xy^{l-r-1} z^{m+2r+rp_{l-r} - (r+1)p_{l-r-1} + p_0}.\]
Now the sum of all these powers of $z$ as $r = 0, \ldots, l-1$ 
telescopes to $\sum_{r = 0}^{l-1} m+2r$, so by the Saturation Lemma 
\ref{BorelLemIJK} we
obtain the statement of the theorem.
\end{proof}

{\section*{Acknowledgments} 
The third author was supported by the framework of PRIN 2010-11 
\emph{Geometria delle variet\`a algebriche}, cofinanced by MIUR. 
}

\bibliographystyle{amsplain}
\bibliography{BibliographyBorel}
\end{document}